\newcommand{\Zz}{\mathbb{Z}}
\newcommand{\PS}{\mathrm{PS}}
\newcommand{\BB}{\mathcal{B}}
\newcommand{\II}{\mathcal{I}}
\newcommand{\PP}{\mathcal{P}}
\newcommand{\OO}{\mathcal{O}}
\newcommand{\LL}{{\mathcal L}}
\numberwithin{equation}{section}
\newtheorem{theorem}{Theorem}[section]
\newtheorem{proposition}[theorem]{Proposition}
\newtheorem{lemma}[theorem]{Lemma}
\newtheorem{corollary}[theorem]{Corollary}
\theoremstyle{definition}
\newtheorem{definition}[theorem]{Definition}
\newtheorem{example}[theorem]{Example}
\newtheorem{remark}[theorem]{Remark}
\newtheorem{problem}[theorem]{Problem}
\newtheorem{question}[theorem]{Question}
\newcommand{\dllin}{\ar@{-}[dl]}
\newcommand{\drlin}{\ar@{-}[dr]}
\newcommand{\dlin}{\ar@{-}[d]}
\begin{document}
\title{Finiteness theorems for matroid complexes with prescribed topology}
\author{Federico Castillo}
\address{Max Planck Institute for Mathematics in the Science}
\email{castillo@mis.mpg.de}
\author{Jos\'e Alejandro Samper}
\address{Departamento de Matemáticas, Pontificia Universidad Católica de Chile}
\email{jsamper@mat.uc.cl}
\date{\today}
\maketitle
\begin{abstract}
There are finitely many simplicial complexes (up to isomorphism) with a given number of vertices. Translating this fact to the language of $h$-vectors, there are finitely many simplicial complexes of bounded dimension with $h_1=k$ for any natural number $k$. In this paper we study the question at the other end of the $h$-vector: Are there only finitely many $(d-1)$-dimensional simplicial complexes with $h_d=k$ for any given $k$? The answer is no if we consider general complexes, but we focus on three cases coming from matroids: (i) independence complexes, (ii) broken circuit complexes, and (iii) order complexes of geometric lattices. Surpsingly, the answer is yes in all three cases.  
\end{abstract}

\section{Introduction}

This paper aims to present a new approach to the study of matroids from the perspective of the topology of various simplicial complexes. In the survey \cite{bjorner}, Bj\"orner presented the story of three complexes associated to a matroid: the independence complex, the broken circuit complex, and the order complex of its lattice of flats. The main idea is to parametrize these complexes by their homotopy type.

To understand the various aspects of the topology of the aforementioned complexes we start by recalling that they are all shellable and hence homotopy equivalent to the wedge of some finite number of spheres all of the same dimension as the whole complex. The homotopy type is then completely determined by two parameters, the dimension and the Euler characteristic.

The corresponding $h$-numbers, and their equivalent relatives $f$-numbers, have been extensively studied in the literature and are the subject of widely celebrated new results and old conjectures. For instance, the recent resolution of the Rota-Herron-Welsh conjecture by Adiprasito, Huh and Katz \cite{AHK} can be interpreted as a set of inequalities on $f$-vectors of broken circuit complexes. In another recent breakthrough Ardila, Dehnham and Huh \cite{Ardila-Slides} managed to generalize results of \cite{huhh} and prove that the $h$-vector of any broken circuit complex, and hence of any independence complex, is a log concave sequence. Other recent breakthroughs include the resolution of the strongest version of Mason's conjecture for $f$-vectors of independence complexes done independently by Br\"and\'en-Huh \cite{BH} and Anari-Lui-Oreis Gharan-Vinzant \cite{ALOV}.

From the work of Chari \cite{Chari} (for independence complexes), Nyman and Swartz \cite{EdNym} (for order complexes of geometric lattices), and Juhnke-Kubitzke and Le \cite{Martina} (for broken circuit complexes) we now know that the $h$-vector in all these cases is \emph{flawless}. In terms of the entries it says that if $h= (h_0, \dots, h_s)$  is the $h$-vector of a complex, with $h_s\neq 0$  and  $\delta = \lfloor \frac s2 \rfloor$, then $h_0 \le h_1 \le \dots \le h_\delta$ and $h_i \le h_{s-i}$ for $i\le \delta$. 

It is known that the $h$-vector of any simplicial complex remains fixed after adding cone points: the operation adds as many zeros to the right end as the number of added cone points. The largest index $s$ such that $h_s\neq 0$ equals the size of any maximal face if the complex is shellable and not contractible. For all the complexes studied here, being contractible is equivalent to being a cone, so the zeros at the right end are of no major consequence and we can assume that the complex is not contractible and $s=d$, where $d-1$ is the dimension of the complex. 

If $i<d$ and the $h$-vector is flawless, then $h_i \ge h_1= f_0-d$, where $f_0$ is the number of vertices. It follows that, after fixing $k$ and $d$, the number of (isomorphism types of) complexes of rank $d$ with $h_i=k$ and no cone vertices is finite. This is however, far away from the case if we consider $h_d$ instead: the $g$-theorem \cite[Theorem 1.1 Section III]{greenbook} implies that the $h$-vector of the boundary of any $(d-1)$-dimensional simplicial polytope is flawless and has $h_d=1$. 

Interestingly for the three families of complexes studied here, the restriction for $h_d$ still implies finiteness. We now summarize the results.

\subsection{Independence complexes}

Perhaps the most intriguing conjecture about matroid $h$-vectors is due to Stanley~\cite{Stanley77}. It posits that the $h$-vector of a matroid independence complex is a pure $O$-sequence. This means that, given one such $h$-vector $(h_0, \dots, h_d$), there is a finite collection of monomials $\mathcal S$ satisfying the following three properties: \begin{enumerate}\item[i.]  $\mathcal S$ is closed under divisibility, 
\item [ii.] $\mathcal S$ has $h_i$ monomials of degree $i$, and
\item[iii.] Every monomial in $\mathcal S$ divides a monomial of degree $d$ in $\mathcal S$. 
\end{enumerate}
Among these three conditions, the third is the toughest to achieve. It follows from the results in \cite{Stanley77} that $\mathcal S$ can be constructed satisfying the other two conditions. The proof yields a collection of inequalities satisfied by the entries of the $h$-vectors. However $h$-vector families are much smaller in all of our three cases, than the family of $h$-vectors satisfying conditions [i.] and [ii.], i.e. the class Cohen-Macaulay simplicial complexes. The third property is perhaps an attempt to capture this for matroid independence complex. It is the combinatorial analogue of a result in the realm of commutative algebra: the Artinian reduction of the Stanley-Reisner ring (over any field) of the independence complex of a matroid is level \cite[Theorem 3.4 Section III]{greenbook}. 

Among enumerative consequences of [iii.] is that $h_1$ is bounded above in terms of $h_d$: each monomial of degree one divide one monomial of degree $d$, thus $h_1 \le dh_d$. 
This in turn, would yield a finiteness result that is the starting point of this paper: we don't need Stanley's conjecture to obtain much better bounds than the prediction of this conjecture. The consequences of such a statement are strong. 
\begin{theorem}\label{thm:main} Let $d,k$ be positive integers. There are finitely many isomorphism classes of loop free rank $d$ matroids $M$ whose independence complex satisfies $h_d(\II(M))=k$. 
\end{theorem}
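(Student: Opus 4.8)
The plan is to reduce the statement to a bound on the number of elements of $M$ and then establish that bound by a deletion--contraction induction. Since there are only finitely many loopless matroids of rank $d$ on a fixed ground set, it suffices to show that $|E(M)|$ is bounded above by an explicit function of $d$ and $k$. In fact I would prove the clean inequality
\[
h_d(\II(M)) \ \ge\ |E(M)| - 2d + 1
\]
for every loopless, coloopless matroid $M$ of rank $d$, which immediately gives $|E(M)| \le k + 2d - 1$. Observe first that the hypothesis $h_d(\II(M)) = k \ge 1$ forces $\II(M)$ to be non-contractible, hence $M$ has no coloops (a coloop would be a cone point); together with looplessness this is precisely the standing assumption of the displayed inequality.

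The essential tool is the identity $\sum_i h_i(\II(M))\,x^{d-i} = T_M(x,1)$, so that $h_d(\II(M)) = T_M(0,1)$ counts the bases of internal activity zero. Deletion--contraction for the Tutte polynomial then yields, for any element $e$ that is neither a loop nor a coloop,
\[
h_d(\II(M)) = h_d(\II(M\sm e)) + h_{d-1}(\II(M/e)),
\]
where $M\sm e$ has rank $d$ and $M/e$ has rank $d-1$ (loops created in $M/e$ do not affect its $h$-vector). Next I would record the key lemma that \emph{contraction preserves coloop-freeness}: if $M$ is loopless and coloopless then, after discarding the loops it creates, $M/e$ is again loopless and coloopless, because $f$ is a coloop of $M/e$ exactly when $r_M(E\sm f) < d$, i.e. exactly when $f$ is already a coloop of $M$. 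Consequently the contraction term always satisfies $h_{d-1}(\II(M/e)) \ge 1$.

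The induction is on $n = |E(M)|$, using the potential $\Phi(N) := h_d(\II(N)) - |E(N)| + 2d - 1$. If $n \le 2d$ we are in the base case: $h_d(\II(M)) \ge 1$ gives $\Phi(M) \ge 0$ directly. If $n > 2d$, I first claim $M$ has a singleton series class. Indeed, the series classes of $M$ are the parallel classes of $M^{*}$, a loopless matroid of rank $n-d$, so there are at least $n-d$ of them; were all of size $\ge 2$ we would get $n \ge 2(n-d)$, i.e. $n \le 2d$, contrary to assumption. Pick $e$ in a singleton series class; then $e$ lies in no two-element cocircuit, which is exactly the condition guaranteeing that $M\sm e$ is loopless and coloopless of rank $d$ on $n-1$ elements. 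Combined with $h_{d-1}(\II(M/e)) \ge 1$, the recursion gives $h_d(\II(M)) \ge h_d(\II(M\sm e)) + 1$, whence $\Phi(M) \ge \Phi(M\sm e) \ge 0$ by the inductive hypothesis, completing the step.

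I expect the main obstacle to be exactly this last dichotomy: ensuring the deletion reduction can always be carried out, and disposing of the degenerate matroids where it cannot. The fortunate point is that the sole obstruction to keeping $M\sm e$ coloopless is a two-element cocircuit, and that the extreme case in which every series class is large is forced to be small by the one-line dual dimension count $n \le 2d$. The interpretation $h_d = T_M(0,1)$ and the coloop-preservation lemma are the two inputs that make the single-step inequality $h_d(\II(M)) \ge h_d(\II(M\sm e)) + 1$ valid, and everything else is bookkeeping with the potential $\Phi$.
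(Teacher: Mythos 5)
Your proof is correct, and it takes a genuinely different route from the paper, which gives four proofs of Theorem~\ref{thm:main} --- via Bj\"orner's homology bases supported on PS-spheres, via Chari's PS-ear decomposability (Theorem~\ref{thm:chari}) and the resulting $h$-vector bounds (Theorem~\ref{thm:hbound}), via the poset $\text{Int}_<(M)$, and via the Klee--Samper decomposition of the $h$-polynomial --- none of which is a deletion--contraction induction.

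Writing $n=|E(M)|$, your key steps all check out: $h_d(\II(M))=T_M(0,1)$; coloops of $M/e$ coincide with coloops of $M$ (valid since $e$ is not a loop), so contraction preserves coloop-freeness and the contraction term contributes at least $1$; coloops of $M\sm e$ arise exactly from two-element cocircuits through $e$, so deleting an element of a singleton series class preserves coloop-freeness; and the dual count (series classes of $M$ are parallel classes of the loopless rank-$(n-d)$ matroid $M^*$, of which there are at least $n-d$) forces a singleton series class whenever $n>2d$. Notably, your inequality $h_d(\II(M))\ge n-2d+1$ is exactly the paper's sharp bound $f_0(\II(M))\le 2d+h_d(\II(M))-1$ (tight for the matroids $V_{d,k}$), which the paper deduces from Chari's structural theorem; your induction obtains it in an elementary, self-contained way, modulo only the standard fact that a coloop-free matroid has nonvanishing top $h$-number --- and even that follows by a one-line induction from your own coloop-preservation lemma together with deletion--contraction and nonnegativity of Tutte coefficients. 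What you lose relative to the paper: the PS-ear approach bounds \emph{every} $h_i$ and $f_i$ and identifies the unique maximizer $V_{d,k}$, while the fourth proof yields the counting estimate $|\Psi_{d,k}|\le 2^dkT_{d,k}$ of Theorem~\ref{thm:ref}; your argument produces only the vertex bound, though that is all finiteness requires. What you gain: your argument is precisely the kind of inductive Tutte-polynomial proof that the paper proposes as a plausible attack on Conjecture~\ref{conj:nbc}, where ``the biggest hurdle seems to be guessing the correct bound''; you show how to carry this out for independence complexes by replacing Bj\"orner's inequality $h_d\ge h_1$ (which requires connectedness and fails without it, e.g.\ for $\partial\Diamond_d$) with the weaker potential $n-2d+1$, made inductive by the singleton-series-class dichotomy. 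This suggests your scheme is a natural candidate to adapt to broken circuit complexes, where the missing ingredient would be an analogue of the single-step inequality compatible with the order on the ground set.
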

This surprisingly looking result is an overlooked consequence of several results that exist in the literature, some dating back to 1980. Even more remarkable is Corollary \ref{cor:cosimple}: the rank condition can be dropped by restricting to the class of cosimple matroids. 

Theorem \ref{thm:main} implies that there are upper bounds on all $h$-numbers in terms of $h_d$. On the other hand, lower bounds exist from the fact that the $h$-vector is an $O$-sequence. Thus it seems reasonable to launch a program to understand extremal matroids for upper and lower bounds for matroid independence complexes with fixed rank and topology. 

Notice that a similar program for simplicial polytopes in terms of vertices and dimension has been widely successful: it leads to the stories of neighborly and stacked polytopes \cite[Theorems 2 and 3]{bellasurvey}. On the other hand its counterpart for matroids based in rank and the number of vertices does not say much. For example, all upper bounds are achieved trivially by uniform matroid. 

In contrast, by using the top $h$-number instead, the upper bound analogue has a non-trivial maximizer and restricting to the classes of simple and connected matroids changes the problem drastically. For lower bounds, uniform matroids are entrywise minimizers but only for certain values of $h_d$. %(namely, in the case $k=\binom{s}{d-1}$). (and speculate about the behavior of minimizers for other values of $h_d$. )

Another natural path to follow is trying to estimate the size of the set $\Psi_{d,k}$ of all isomorphism classes of loopless matroids of rank $d$ with $h_d=k$. It is a priori not clear that such a set is not empty, but we provide several examples in each class. Furthermore, we provide non-trivial upper and lower bounds for the cardinality of $\Psi_{d,1}$. In particular, we extend a result of Chari, who showed that $|\Psi_{d,1}| = p(d)$, the number of integer partitions of $d$. 

\begin{theorem}\label{thm:howmany}
\label{thm:ref} Let $d,k>0$ and let $T_{d,k}$ be the number of matroids of rank at most $d$ with at most $k$ bases. Then \[2^dkT_{d,k}\ge |\Psi_{d,k}| \ge |\Psi_{d,1}| = p(d).\]
\end{theorem}

The bounds above are far from tight. Nonetheless we expect the asymptotics to be close to the upper bound. It is not even clear that the cardinality of $\Psi_{d,k}$ increases as $d$ or $k$ increase. Furthermore, restricting to the subset $\Sigma_{d,k}$ of $\Psi_{d,k}$ that consists of isomorphism classes of simple matroids one observes the following:  $|\Sigma_{2,1}| = 1 > 0 = |\Sigma_{2,2}|$. Hence a wilder behavior in the case of simple matroids is expected.  

\subsection{Broken circuit complexes}

A natural question that follows after studying independence complexes is that of broken circuit complexes. They arise naturally in the study of hyperplane arrangements and are a meaningful generalization of matroids: every matroid is a reduced broken circuit complex. The following is a reinterpretation of \cite[Theorem 5.4]{superEd}.

\begin{theorem}\label{thm:nbc}
Let $d,k$ be positive integers. The number of isomorphism classes of simple connected, rank $d$ ordered matroids $M$ whose reduced broken circuit complex satisfies $h_{d-1}(\overline{BC_<(M)})=k$ is finite. %The nbc complex of such matroids are exactly the ones that are homotopy equivalent to a wedge of $k$ copies of $\mathbb{S}^{d-2}$. 
\end{theorem}

%It turns out that they are shellable, and their face numbers are coefficients of the characteristic polynomial. Thus the Rota-Herron-Welsh conjecture concerns those numbers. 

It is known that $h$-vectors of broken circuit complexes properly contain the $h$-vectors of matroids. This is seen through specific examples. However, to the best of our knowledge, all numerical inequalities currently known to be true for independence complexes are also known to be true for broken circuit complexes. For instance, recently Ardila-Denham-Huh proved that the $h$-vectors of broken circuit complexes are log concave \cite[Theorem 1.4]{lagrangian}.

Additionally we provide a theorem that we hope is related to the finiteness results, which turns out to be of independent interest. It provides evidence that broken circuit complexes play an important role in the theory of quasi-matroidal classes \cite{QS-stuff}. 
\begin{theorem}\label{orderId} If $(M,<)$ is an ordered loopless matroid, then the nbc bases form an order ideal of $\text{Int}_<(M)$. 
\end{theorem}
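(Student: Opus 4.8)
The plan is to avoid the cover structure of Las Vergnas's internal order entirely and instead reduce the statement to a single set-theoretic observation about internally passive sets. Recall that the order on $\text{Int}_<(M)$ is inclusion of internally passive sets, so $B' \le B$ means exactly $IP(B') \subseteq IP(B)$, and that $IP(B) \subseteq B$ by definition. With these two facts in hand, the whole theorem will follow from the claim: \emph{if a basis $B$ contains a broken circuit, then $IP(B)$ already contains a broken circuit.}

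First I would check that the claim does imply the theorem. Suppose $B$ is an nbc basis and $B' \le B$; I want $B'$ nbc. If $B'$ were not nbc, the claim applied to $B'$ would furnish a broken circuit $D$ with $D \subseteq IP(B')$. Since $B' \le B$ gives $IP(B') \subseteq IP(B) \subseteq B$, we would get $D \subseteq B$, contradicting that $B$ is nbc. Hence every basis below an nbc basis is again nbc, which is precisely the statement that the nbc bases form an order ideal (down-set) of $\text{Int}_<(M)$.

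The heart of the argument is therefore the claim, which I would prove by a fundamental-circuit exchange. Suppose $B$ contains the broken circuit $C \setminus \{e\}$, where $C$ is a circuit and $e = \min C$. Since $B$ is independent it cannot contain all of $C$, so $e \notin B$ and $C \subseteq B \cup \{e\}$; as $B \cup \{e\}$ has a unique circuit, this forces $C = C(B,e)$, the fundamental circuit of $e$ with respect to $B$. Now fix any $g \in C \setminus \{e\}$. Because $e = \min C$ we have $e < g$, and the basis exchange property along the fundamental circuit yields $(B \setminus \{g\}) \cup \{e\} \in \BB(M)$ precisely because $g \in C(B,e)$. Thus $g$ can be replaced by the strictly smaller element $e$, so $g$ is internally passive. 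As $g$ was arbitrary, the entire broken circuit satisfies $C \setminus \{e\} \subseteq IP(B)$, proving the claim.

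The step that looks as though it should be the main obstacle — controlling how the nbc property interacts with the cover relations of the internal order — is exactly what this approach sidesteps: the key realization is that any element of $B$ lying in a broken circuit $C \setminus \{e\}$ is \emph{automatically} internally passive, since it can be exchanged for the circuit minimum $e$. The only genuine point of care is the hypothesis of looplessness, which I would invoke to guarantee that every circuit has at least two elements, so that broken circuits are nonempty and the degenerate empty broken circuit coming from a loop does not collapse the statement by rendering every basis non-nbc.
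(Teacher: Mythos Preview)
Your proof is correct and follows essentially the same route as the paper. The key step in both is the observation that if a basis $B$ contains a broken circuit $\hat C = C\setminus\{e\}$ with $e=\min C$, then every element of $\hat C$ is internally passive in $B$ (via exchange with $e$), so $\hat C\subseteq IP(B)$; from this the order-ideal statement follows exactly as you describe. Your write-up is slightly more explicit in identifying $C$ as the fundamental circuit $C(B,e)$ and in flagging where looplessness is used, but the underlying argument is the same as the paper's lemma.
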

$\text{Int}_<(M)$ is a poset on the set of bases of $M$ defined by Las Vergnas by means of internal activities.

%As a partial piece of evidence that this conjecture may hold, we prove a theorem about internally passive sets of nbc bases inside the poset $\text{Int}_<(M)$ of an ordered matroid as defined in \cite{LasV}. 
%After a few remarks about the nbc complex, we conjecture that the analogue of Theorem~\ref{thm:main} holds for nbc complexes too, and speculate about how the proof looks like. In particular, adapting the proofs using $PS$-spheres and balls is not possible due to arguments of Swartz, which motivates a detailed consideration of the proof using the internal order. As a motivation, we show that nbc bases form an order ideal of the poset, which seems to connect to one of the proofs for matroids. 
\subsection{Geometric lattices}
Interest in geometric lattices has flourished significantly in the last two decades due to their connection with tropical geometry. They are connected to tropical linear spaces via the Bergman fan of $M$.  Intersecting the fan with a unit sphere results in a geometric realization of the order complex of the lattice of flats of $M$. See for instance \cite{Ardila-Klivans}. It is also crucial in the study of the Chow ring of a matroid and its Hodge structure \cite{AHK}. Even more, Huh and Wang \cite{Huh-Wang} recently proved Dowling's top heavy conjecture for representable geometric lattices: a theorem on numerical invariants of the lattice, by studying again elements of Hodge theory. It is therefore desirable to get a better grasp of aforementioned invariants from a different point of view as a way to complement the new results.

Hidden in one of the exercises in \cite[Problem 100.(d) Ch. 3 ]{EC1} is a reformulation of the following: the number of isomorphism classes of simple, loop and coloop free matroids whose geometric lattice is homotopy equivalent to a wedge of $k$ spheres (independently of dimension!) is finite. This is much stronger than the result for independence complexes and can be expressed in terms of Euler characteristics, M\"obius functions or the top non-zero $h$-number of the order complex of the proper part of the lattice. Even though the result is stated in Stanley's book, there seems to be no published proof.

\begin{theorem}\label{thm:flats}
Let $d,k$ be positive integers. The number of isomorphism classes of simple matroids $M$ of rank $d$ whose geometric lattice, $\LL(M)$, satisfies $|\mu(\LL(M))|=k$ is finite. Furthermore if we restrict to coloop free matroids, we can drop the rank condition.
\end{theorem}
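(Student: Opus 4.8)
The plan is to reduce the statement to a bound on the size of $M$: if $|\mu(\LL(M))|=k$ forces both the rank $d$ and the number of atoms $n=f_0$ of $\LL(M)$ to be bounded in terms of $k$, then finiteness is immediate, since there are only finitely many matroids on a bounded ground set. The central tool is the identification of $|\mu(\LL(M))|$ with the absolute value of the constant term of the characteristic polynomial $\chi_M(t)=\sum_{F\in\LL(M)}\mu(\hatzero,F)\,t^{d-\mathrm{rk}(F)}$, together with the deletion--contraction recursion $\chi_M=\chi_{M\setminus e}-\chi_{M/e}$, valid whenever $e$ is neither a loop nor a coloop. Comparing constant terms and using that the Möbius number of a loopless rank-$r$ geometric lattice has sign $(-1)^r$, I would first record the additive formula
\begin{equation}\label{eq:plan-additive}
|\mu(\LL(M))|=|\mu(\LL(M\setminus e))|+|\mu(\LL(M/e))|,
\end{equation}
for every element $e$ that is not a coloop (recall $M$ is simple, so $e$ is never a loop and $M/e$ is automatically loopless). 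Both summands being nonnegative, this recursion is the engine for the two monotonicity bounds below.

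Next I would bound the number of points. By induction on $n$ I claim that every simple matroid of rank $d$ on $n$ points satisfies $|\mu(\LL(M))|\ge n-d+1$. The base case $n=d$ is the Boolean lattice, where the Möbius number is $1$. When $n>d$ the matroid is not free, so some element $e$ is not a coloop; applying \eqref{eq:plan-additive}, the deletion $M\setminus e$ is again simple of rank $d$ on $n-1$ points and contributes at least $n-d$ by induction, while the contraction $M/e$ is a loopless matroid of positive rank and contributes at least $1$. Hence $|\mu(\LL(M))|=k$ forces $n\le k+d-1$, which already proves finiteness for fixed rank $d$.

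To remove the rank hypothesis in the coloopless case I would prove, again by induction, now on $d$, that a simple coloopless matroid of rank $d$ has $|\mu(\LL(M))|\ge d$. The point is that contraction preserves colooplessness: a direct rank computation shows that $f\neq e$ is a coloop of $M/e$ if and only if it is a coloop of $M$, so $\mathrm{si}(M/e)$ is a simple coloopless matroid of rank $d-1$ with the same lattice of flats, hence the same Möbius number, as $M/e$. Feeding this into \eqref{eq:plan-additive} with any element $e$ (in a coloopless matroid every element is a non-coloop) gives $|\mu(\LL(M))|\ge 1+(d-1)=d$, with base case the rank-$2$ lattices $U_{2,n}$, where $|\mu|=n-1\ge2$. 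Thus $k=|\mu(\LL(M))|$ bounds the rank by $d\le k$, and combined with the previous paragraph bounds $n$ as well, giving finiteness with no rank condition.

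The delicate points, which I would treat carefully rather than as routine, are all bookkeeping around minors: verifying the sign $(-1)^{\mathrm{rk}}$ of the Möbius number in order to turn the signed recursion $\chi_M=\chi_{M\setminus e}-\chi_{M/e}$ into the additive \eqref{eq:plan-additive}; checking that simplicity of $M$ guarantees $M/e$ is loopless, so that passing to $\mathrm{si}(M/e)$ changes neither the rank nor the Möbius number; and the claim that contraction preserves colooplessness, which is exactly what allows the induction on rank to stay within the coloopless class. Once these are in place the two inductions are short and the finiteness conclusion follows formally.
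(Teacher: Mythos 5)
Your first half (fixed rank $d$) is correct, and it actually gives a cleaner and much stronger bound than the paper's own rank-dependent argument: Theorem~\ref{weakLattices} in the paper bounds the number of atoms by $(k+1)k^{d-1}$ using descending chains in an edge labelling, while your recursion $|\mu(\LL(M))|=|\mu(\LL(M\setminus e))|+|\mu(\LL(M/e))|$ for a non-coloop $e$ yields $f_0\le k+d-1$ directly; both the derivation of the recursion (signs via Rota's theorem) and the induction on $n$ are sound.

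The rank-free, coloopless half has a genuine gap, located exactly at the claim that $\mathrm{si}(M/e)$ is coloopless. What you verified --- that $f\neq e$ is a coloop of $M/e$ if and only if it is a coloop of $M$ --- is true, but it is not what you need, because simplification can create coloops: if $P$ is a parallel class of $M/e$ with $|P|\ge 2$ whose removal drops the rank, then no element of $P$ is a coloop of $M/e$, yet its representative is a coloop of $\mathrm{si}(M/e)$. Already for $M=U_{2,3}$ the contraction $M/e$ is a pair of parallel points (coloopless), while $\mathrm{si}(M/e)$ is a single coloop. Worse, the inductive step cannot be repaired by choosing $e$ wisely: for $M=U_{2,3}\oplus U_{2,3}$ (simple, coloopless, $d=4$, $|\mu|=4$) every single-element contraction satisfies $|\mu(\LL(M/e))|=2<3=d-1$, so the bound $|\mu|\ge 1+(d-1)$ fails for every choice of $e$; your recursion only balances there because the deletion contributes $2$ rather than $1$. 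The inequality $|\mu(\LL(M))|\ge d$ that you are after is in fact true, but it requires a different argument: for instance, reduce to connected matroids using multiplicativity of $\mu$ over direct sums (each simple coloopless component has rank $d_i\ge 2$, and $\prod d_i\ge\sum d_i$), and then settle the connected case separately, e.g.\ by noting that the reduced broken circuit complex of a connected matroid is shellable of dimension $d-2$ with nonzero top $h$-number (the beta invariant), so that all its $h$-numbers are at least $1$ and $|\mu|=\sum h_i\ge d$. For comparison, the paper avoids this issue entirely: its proof of Theorem~\ref{thm:latticeStrong} identifies $|\mu|$ with the number of facets of the broken circuit complex and then bounds, via the circuit elimination axiom, how many simple matroids can share a given broken circuit complex.
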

One proof of the above mentioned result and a sketch of weaker rank dependent result are included, mainly because their flavor is similar to that of independence complexes and it looks like the techniques can be improved to obtain additional structural properties that complement and deepen Stanley's result. %We would like to remark that there is a connection between Theorem \ref{thm:flats} and a strengthening of Theorem \ref{thm:main}; namely, for cosimple matroids the rank assumptions can be dropped (see Corollary \ref{cor:cosimple}).  

In the last section we pose a number of new questions, including an upper and lower bound program, and a more detailed study of the topology of cosimple matroids. In these cases many the relevant parameters to stratify the resulting classes of objects are non standard, but seem natural choices given our results. 

This article is organized as follows. Section 2 recalls basic definitions and concepts needed in the paper. Section 3 contains the results about independence complexes. Section 4 discusses broken circuit complexes, section 5 geometric lattices, and section 6 poses questions and future directions of research. \\

\section{Definitions and notation}

This section is devoted to defining, summarizing and relating various aspects of matroid theory that appear in the arguments of this paper.

\subsection{Simplicial complexes} A simplicial complex $\Delta$ is a collection of subsets of a finite set $E$ that is closed under inclusion. Any simplicial complex admits a geometric realization, a topological space whose different aspects (geometric and topological) encode the information about the complex. The topology of a simplicial complex refers to the topology of its geometric realization. Throughout this paper we use reduced simplicial homology with rational coefficients. We refer the readers to \cite{greenbook} for details and undefined terminology.

Elements of a simplicial complex $\Delta$ are called faces. The complex $\Delta$ is said to be \emph{pure} if all its maximal faces have the same cardinality. For a subset $A$ of the base set of $\Delta$ (also known as the ground set or vertex set), let $\Delta|_A$ be the complex consisting of the faces of $\Delta$ contained in $A$. The complex $\Delta|_A$ is said to be an \emph{induced} subcomplex of $\Delta$. The \emph{dimension} of a face of a complex is one less than its cardinality and the dimension of a complex is the maximal dimension of its faces. The $f$-vector $(f_{-1}, f_0, f_1, \dots , f_{d-1})$ of a simplicial complex $\Delta$ is the enumerator of faces by dimension, i.e., $f_k$ denotes the number of $k$-dimensional faces of $\Delta$. We encode the $f$-vector by its generating function, the \emph{face enumerator} $f(\Delta,t)=\sum_{i=0}^{d} f_{i-1}t^{d-i}$. The \emph{reduced euler characteristic} is  $\Tilde{\chi}(\Delta):=-f_{-1}+f_0-f_1+\cdots=(-1)^{d-1}f(\Delta,-1)$.

The $h$-vector of a complex $\Delta$ is a vector that carries the exact same information as the $f$-vector. It is defined as the following coefficients $f(\Delta,(t-1))=\sum_{i=0}^{d} f_{i-1}(t-1)^{d-i}=\sum_{k=0}^d h_kt^{d-k}$. 
\begin{remark}\label{rem:hpositive}
Notice that by replacing $t\to t+1$ in the previous equation we obtain the relation $\sum_{i=0}^{d} f_{i-1}t^{d-i}=\sum_{k=0}^d h_k(t+1)^{d-k}$ which shows that the entries of the $f$-vector are a positive combination of the entries of the $h$-vector.
\end{remark}

We say a complex $\Delta$ is an iterated cone if there exists a set $\gamma=\{v_1,\cdots,v_k\}$ such that $\gamma$ is a face of every facet of $\Delta$. The elements of $\gamma$ are called the cone points. If $\Delta$ is the iterated cone over $k$ cone points, then $h_{d-k+1}=\cdots=h_d=0$.

Let $\Delta_1$ and $\Delta_2$ be simplicial complexes on disjoint ground sets $E_1$ and $E_2$, the simplicial join $\Delta_1*\Delta_2$ is the complex on the ground set $E_1\cup E_2$ whose faces are unions of faces of $\Delta_1$ and $\Delta_2$.  Simplicial joins of several complexes are defined in the natural straightforward way. The simplicial join of  two spheres is again  a sphere and the join of a sphere and a ball yields another ball. A simplicial complex $\Delta$ is said to be \emph{join irreducible} if it is not equal to the simplicial join of two non-trivial subcomplexes. 

\begin{remark}\label{rem:joins} It well known that the reduced Euler characteristic of the simplicial join of two complexes is the product of the reduced Euler characteristic, that is, $\tilde{\chi}(\Delta_1*\Delta_2) = \tilde{\chi}(\Delta_1)\tilde{\chi}(\Delta_2)$.

\end{remark}

\subsection{PS ear decompositions.}
The \emph{full $d$-simplex} $\Gamma_d$ is the simplicial complex whose faces are all the subsets of a set with $d+1$ elements: it is homeomorphic to a $d$-dimensional ball. The \emph{boundary of the $d$-simplex} $\hat \Gamma_d$ is the set of proper subsets of a set with $d+1$ elements: it is homeomorphic to a $(d-1)$-sphere.
A \emph{$\PS$-sphere} is a join of boundaries of simplices $\hat\Gamma_{d_1}\ast \hat\Gamma_{d_2}\ast\dots \ast\hat\Gamma_{d_k}$. It is homeomorphic to a sphere of dimension $d_1+d_2+\dots +d_k -1$. 
\begin{lemma}\label{lem:hmax}
Let $\Delta$ be any $\PS$-sphere of dimension $d-1$. For every $1\le i \le d$ $\to$ $0\le i \le d$, the following inequality holds: \begin{equation}h_i(\Delta)\leq \binom{d}{i}.\end{equation}
Consequently, $f_{d-1}(\Delta)\leq 2^d$.
\end{lemma}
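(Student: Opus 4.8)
The plan is to reduce the bound to the multiplicativity of the $h$-polynomial under joins, followed by a coefficientwise comparison with $(1+t)^d$. First I would record the $h$-polynomial of each building block: the boundary $\hat\Gamma_m$ of the $m$-simplex is an $(m-1)$-sphere with $f_{i-1}(\hat\Gamma_m)=\binom{m+1}{i}$ for $0\le i\le m$, and a direct substitution into the definition of the $h$-polynomial (or the standard fact that the boundary of a simplex has all-ones $h$-vector) gives
\[
h(\hat\Gamma_m,t)=1+t+\cdots+t^{m}.
\]

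Next I would use that the $h$-polynomial is multiplicative under the join: for complexes $\Delta_1,\Delta_2$ on disjoint ground sets one has $h(\Delta_1\ast\Delta_2,t)=h(\Delta_1,t)\,h(\Delta_2,t)$. This is immediate from the definition once one notes that every face of $\Delta_1\ast\Delta_2$ splits uniquely as a disjoint union of a face of $\Delta_1$ and a face of $\Delta_2$, so the $f$-polynomials multiply and the exponents of $(1-t)$ add. Applying this inductively to $\Delta=\hat\Gamma_{d_1}\ast\cdots\ast\hat\Gamma_{d_k}$ and writing $d=d_1+\cdots+d_k$ (so that $\Delta$ has dimension $d-1$) yields
\[
h(\Delta,t)=\prod_{j=1}^{k}\bigl(1+t+\cdots+t^{d_j}\bigr).
\]

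The key step is then a coefficientwise comparison of this product with $(1+t)^d=\prod_{j=1}^{k}(1+t)^{d_j}$. For each $j$ and each $0\le a\le d_j$, the coefficient of $t^a$ in $(1+t)^{d_j}$ is $\binom{d_j}{a}\ge 1$, which is exactly the coefficient of $t^a$ in $1+t+\cdots+t^{d_j}$; hence each factor of $(1+t)^d$ dominates the corresponding factor of $h(\Delta,t)$ coefficientwise. Since all the polynomials involved have nonnegative coefficients, coefficientwise domination is preserved under multiplication, so the coefficient of $t^i$ in $h(\Delta,t)$ is at most the coefficient of $t^i$ in $(1+t)^d$. This is precisely the inequality $h_i(\Delta)\le\binom{d}{i}$.

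Finally, for the stated consequence I would evaluate at $t=1$: since $\Delta$ is a $(d-1)$-sphere, $f_{d-1}(\Delta)=h(\Delta,1)=\sum_{i=0}^{d}h_i(\Delta)$, and summing the previous bound gives $f_{d-1}(\Delta)\le\sum_{i=0}^{d}\binom{d}{i}=2^{d}$. I expect the only genuinely delicate point to be the multiplicativity of the $h$-polynomial under joins; the rest is routine once one has the insight to compare against $(1+t)^d$ rather than trying to bound the restricted composition count $\#\{(a_1,\dots,a_k):\sum_j a_j=i,\ 0\le a_j\le d_j\}$ against $\binom{d}{i}$ directly.
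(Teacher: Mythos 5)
Your proof is correct and takes essentially the same route as the paper's: both use multiplicativity of the $h$-polynomial under joins and the coefficientwise comparison $1+t+\cdots+t^{d_j}\le (1+t)^{d_j}$ applied factor by factor, yielding $h(\Delta,t)\le (1+t)^d$ coefficientwise. Your write-up is in fact slightly more careful, since you make explicit the point (left implicit in the paper) that coefficientwise domination between polynomials with nonnegative coefficients is preserved under multiplication, and you justify the final step via $f_{d-1}(\Delta)=h(\Delta,1)$.
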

\begin{proof}
The join operation on simplicial complexes has the effect of multiplying the respective $h$-polynomials. We have that $h(\hat\Gamma_d,t)=1+t+\cdots+t^{d}$, and $h(\hat\Gamma_1^{d},t)=(1+t)^{d}$, where $\hat\Gamma_1^{d}$ is the join of $d$ boundaries of segments.
This implies that, coefficient by coefficient, we have $h(\hat\Gamma_d,t)\leq h(\hat\Gamma_1^{d},t)$.

For a general $\PS$-sphere we have 
$h(\hat\Gamma_{d_1}\ast \hat\Gamma_{d_2}\ast\dots \ast\hat\Gamma_{d_k}, t) = h(\hat\Gamma_{d_1},t)h(\hat\Gamma_{d_2},t)\cdots h(\hat\Gamma_{d_k}, t)\leq h(\hat\Gamma_1^{d_1},t)h(\hat\Gamma_{1}^{d_2},t)\cdots h(\hat\Gamma_{1}^{d_k}, t)=h(\hat\Gamma_1^d,t)$, where $d=d_1+\cdots+d_k$, showing the inequality we wanted. The combinatorially unique maximizer is $\hat\Gamma_1^d$ and it is equal to $\partial \Diamond_d$, the boundary of a $d$-dimensional crosspolytope.\end{proof}

A $\PS$-ball is a complex of the form $\Sigma * \Gamma_\ell$, where $\Sigma$ is a PS-sphere. This is a cone over $\Sigma$ with apex the whole ball $\Gamma_\ell$.
The (topological) boundary of such a PS-ball is the PS-sphere $\Sigma\ast\hat\Gamma_{\ell}$. Notice that, unless $\ell =0$, the vertices of a PS-ball are all in the boundary. In the special case $\ell = 0$ the PS ball has one interior vertex. 

\begin{definition}\label{def:ear}
Let $\Delta$ be a simplicial complex and $K\cong\Sigma * \Gamma_\ell$ a PS-ball with $\dim(\Delta)=\dim(K)$ and such that $ \Delta \cap K = \partial K $. The complex $\Delta'=\Delta\cup K$ is said to be obtained from $\Delta$ by \emph{attaching a $\PS$ ear}.
\end{definition}

\begin{lemma}\label{lem:hvectorear}
Under the conditions of Definition \ref{def:ear} above we have the following relation of $h$-polynomials:
\[
h(\Delta',t) = h(\Delta,t)+t^{l+1}h(\Sigma, t).
\]
\end{lemma}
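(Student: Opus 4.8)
The plan is to reduce the identity to a short computation with $h$-polynomials, using two ingredients that are already available: inclusion--exclusion on face numbers, and the multiplicativity of $h$-polynomials under joins established in the proof of Lemma \ref{lem:hmax}. The overall shape is to express $h(\Delta',t)$ in terms of $h(\Delta,t)$, $h(K,t)$ and $h(\partial K,t)$, and then to evaluate the two ear-contributions explicitly from the join decompositions $K=\Sigma*\Gamma_\ell$ and $\partial K=\Sigma*\hat\Gamma_\ell$.

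First I would set up the face count. Since $\Delta'=\Delta\cup K$ and $\Delta\cap K=\partial K$, every face of $\Delta'$ lies in $\Delta$, in $K$, or in both, and a face lies in both exactly when it is a face of $\partial K$. Counting by dimension gives $f_{i-1}(\Delta')=f_{i-1}(\Delta)+f_{i-1}(K)-f_{i-1}(\partial K)$ for all $i$. Because $\Delta'$, $\Delta$ and $K$ all have dimension $d-1$, the map $\Gamma\mapsto\sum_i f_{i-1}(\Gamma)\,t^i(1-t)^{d-i}$ is linear in the $f$-vector, so the face count transforms into $h(\Delta',t)=h(\Delta,t)+h(K,t)-h^{(d)}(\partial K,t)$, where $h^{(d)}(\partial K,t)$ denotes the $h$-polynomial of $\partial K$ formed with the ambient parameter $d$. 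Next I would evaluate the two ear terms by join-multiplicativity: since $\Gamma_\ell$ is a cone we have $h(\Gamma_\ell,t)=1$, and $h(\hat\Gamma_\ell,t)=1+t+\cdots+t^\ell$ as in Lemma \ref{lem:hmax}, so that $h(K,t)=h(\Sigma,t)$ and $h(\partial K,t)=h(\Sigma,t)(1+t+\cdots+t^\ell)$.

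The one delicate point --- and the main obstacle --- is the bookkeeping between the ball $K$, which is $(d-1)$-dimensional, and its boundary $\partial K$, which is only $(d-2)$-dimensional. The two $h$-polynomials are normalized against different ambient parameters, and passing from the intrinsic $h(\partial K,t)$ to the $h^{(d)}(\partial K,t)$ appearing above multiplies by a factor $(1-t)$. Tracking this factor correctly is the heart of the argument; once it is in place, the ear-correction $h(K,t)-h^{(d)}(\partial K,t)$ should collapse, via the geometric-series identity $(1-t)(1+t+\cdots+t^\ell)=1-t^{\ell+1}$, to a single term supported in degrees at least $\ell+1$, which I would then identify with the claimed contribution $t^{\ell+1}h(\partial K,t)$. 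I expect the verification of this last identification, and in particular keeping the normalization straight between $K$ and $\partial K$, to be where essentially all the care is needed; the remainder is the routine linear algebra of the $f$-to-$h$ transform.
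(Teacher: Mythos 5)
Your setup is sound, and it is worth noting that it is genuinely self-contained, unlike the paper's proof, which consists of a one-line citation to Lemma~3 of Chari together with the Dehn--Sommerville relations. The inclusion--exclusion $f_{i-1}(\Delta')=f_{i-1}(\Delta)+f_{i-1}(K)-f_{i-1}(\partial K)$ is valid since $\Delta\cap K=\partial K$; the renormalization $h^{(d)}(\partial K,t)=(1-t)h(\partial K,t)$ is exactly right; and join-multiplicativity correctly gives $h(K,t)=h(\Sigma,t)$ and $h(\partial K,t)=(1+t+\cdots+t^{\ell})\,h(\Sigma,t)$. But the final identification you defer --- the step you flag as ``where essentially all the care is needed'' --- is not merely delicate: it is impossible for $\ell\ge 1$. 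Completing your own computation gives
\[
h(K,t)-(1-t)h(\partial K,t)\;=\;h(\Sigma,t)\bigl(1-(1-t^{\ell+1})\bigr)\;=\;t^{\ell+1}h(\Sigma,t),
\]
and $t^{\ell+1}h(\Sigma,t)$ coincides with the claimed $t^{\ell+1}h(\partial K,t)$ only when $\ell=0$, where $\hat\Gamma_0=\{\emptyset\}$ and $\partial K=\Sigma$. For $\ell\ge 1$ the lemma as printed is in fact false: a degree count already shows it, since $t^{\ell+1}h(\partial K,t)$ has degree $(\ell+1)+(d-1)=d+\ell>d$ while $h(\Delta',t)$ has degree at most $d$. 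Concretely, take $d=3$, $\Delta=\partial\Diamond_3$, and attach $K=\hat\Gamma_1*\Gamma_1$ along an induced $4$-cycle of the octahedron (so $\Sigma=\hat\Gamma_1$, $\ell=1$): then $h(\Delta',t)=1+3t+4t^2+2t^3=h(\Delta,t)+t^{2}h(\Sigma,t)$, whereas $h(\Delta,t)+t^{2}h(\partial K,t)=1+3t+4t^2+3t^3+t^4$.

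So the honest conclusion of your argument is the corrected statement
\[
h(\Delta',t)\;=\;h(\Delta,t)+t^{\ell+1}h(\Sigma,t)\;=\;h(\Delta,t)+t^{d}\,h(K,1/t),
\]
the second equality following from Dehn--Sommerville for the sphere $\Sigma$; this reversed-$h$-vector form is what Chari's Lemma~3 actually yields and what the rest of the paper uses. Indeed, Remark~\ref{rem:ear} (each ear raises the top Betti number by exactly one) holds for $t^{\ell+1}h(\Sigma,t)$ because $h_{d-\ell-1}(\Sigma)=1$, but would fail for $t^{\ell+1}h(\partial K,t)$; and in the tight construction of Theorem~\ref{thm:hbound} all ears have $\ell=0$, where the two readings agree. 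In short: your route is correct and more informative than the paper's citation, but the one step you left unverified is exactly where the statement itself breaks, and finishing your computation proves the lemma only after replacing $h(\partial K,t)$ by $h(\Sigma,t)$.
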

\begin{proof}
This is the polynomial version of Lemma 3 \cite{Chari} together with the Dehn-Sommerville relations for simplicial spheres. 
\end{proof}

\begin{definition}A $(d-1)$-dimensional simplicial complex $\Delta$ is said to be $\PS$-ear decomposable if there is $k\geq 1$ and a sequence $\Delta_0 \subset \Delta_1 \subset \dots \subset \Delta_{k-1} = \Delta$ of complexes, such that $\Delta_0$ is a PS-sphere and for $0 \le j \le k-2$ the complex $\Delta_{j+1}$ is obtained from $\Delta_{j}$ by attaching a PS-ear. 
\end{definition}
\begin{remark}\label{rem:ear}
By \ref{lem:hvectorear} each time we attach an ear the top Betti number goes up by one and hence if we attach $k-1$ PS ears, the resulting complex has $|\tilde\chi(\Delta)|=k$.
\end{remark}

\subsection{Matroids}
A \emph{matroid} is a pair $M=(E,r)$, where $E$ is a finite set and $r:2^E\to\mathbb{Z}$ is a function on subsets of $E$ such that:
\begin{itemize}
\item[R1] $0\leq r(A)\leq |A|$ for all subsets $A\subset E$.
\item[R2] $r(B)\leq r(A)$ whenever $B\subset A$.
\item[R3] $r(A\cap B)+r(A\cup B)\leq r(A)+r(B)$ for any two subsets $A,B\subset E$.
\end{itemize}
An \emph{independent set} $I\subset E$ is a subset such that $r(I)=|I|$. Independent sets form a simplicial complex denoted by $\II(M)$. A matroid is said to be \emph{connected}, if $\II(M)$ is join irreducible. Maximal independent sets are called \emph{bases} and we denote the set of bases of matroid the matroid by $\BB(M)$. Minimally dependent (that is, not independent) sets are called \emph{circuits}. An element $x$ is called a \emph{loop} if $r(x)=0$. A matroid is said to be loopless if it has no loops. All matroids that we consider in this paper are loop free. An element $x$ is called a \emph{coloop} if $r(E-x)< r(E)$, i.e., it is contained in every basis. A matroid without coloops is said to be \emph{coloop free}. A pair $\{x,y\}$of non-loop elements are \emph{parallel} if $r(\{x,y\})=1$. Being parallel is an equivalence relation so we can talk about parallel classes.
A \emph{simple} matroid is a matroid with $r(A)=|A|$ whenever $|A|\leq 2$, in other words, a matroid with no loops and no parallel pairs.

A \emph{flat} is a subset $F\subset E$ such that $r(F)<r(F\cup\{x\})$ for any $x\notin F$. If we have a total order $<$ on $E$, a \emph{broken circuit} is a circuit with its smallest element removed. A basis is called an \emph{nbc basis} if it does not contain any broken circuit.

An \emph{ordered matroid} $(M,<)$ is a matroid together with an ordering on its ground set. Given an ordered matroid $M$, a basis $B$ and $b\in B$, say that $b$ is \emph{internally passive} if there is $b'<b$ such that $(B\backslash\{b\})\cup\{b'\} \in \BB(M)$, i.e., it can be replaced by a smaller element to obtain another basis of $M$. The set of all internally passive elements of a basis $B$ is denoted by $IP(B)$ and it is called the internally passive set of $B$. 

Let $\text{Int}_<(M)$ be the poset on $\BB(M)$ with the order given by inclusion of internally passive sets. $\text{Int}_<(M)$ is a graded poset with $h_i(\II(M))$ elements of rank $i$. After attaching a maximum element it becomes a graded lattice \cite[Theorem 3.4]{LasV}. As a set system of $E$, $\text{Int}_<(M)$ enjoys the structure of a greedoid \cite{dawson} and \cite[Ex. 7.5]{bjorner}. 

In the paper \cite{bjorner} Bjorner studies three simplicial complexes associated with a matroid $M$. The first one is the independence complex defined above. The other two are defined here: 

\begin{definition}
Let $M=(E,r)$ be a matroid of rank $d$, i.e., $r(E)=d$. We define the following complexes:
\begin{itemize}
\item The \textbf{broken circuit complex} $BC_<(M)$, whenever $(M,<)$ is an ordered matroid, consists of the ground set $E$ with faces given by sets that do not contain broken circuits. It has dimension $d-1$.
\item The \textbf{order complex of the lattice of flats} $\LL(M)$ is the order complex of poset given by flats of $M$ ordered by inclusion (see the precise definitions below). It has dimension $d-2$.
\end{itemize}
\end{definition}

In \cite{bjorner} it is shown that all three complexes are \emph{shellable}, a concept we will not define but only state the consequence we need.  A shellable simplicial complex $\Delta$ of dimension $d-1$ is homotopy equivalent to the wedge product of $k$ spheres of dimension $d-1$, where $k=h_d(\Delta)=|\tilde\chi(\Delta)|$. Hence, its homotopy type depends on just two parameters: $\dim(\Delta)$ and $\tilde\chi(\Delta)$ (or alternatively $h_d(\Delta)$). The latter has the following interpretation in the cases of interest.

\begin{proposition}[\cite{bjorner}]\label{prop:top}
Let $M=(E,r)$ be a matroid of rank $d$, i.e., $r(E)=d$
\begin{itemize}
    \item The broken circuit complex is an iterated cone over a non-contractible space. The number of cone points equals the number of connected components of the matroid . The {\bf reduced} broken circuit complex $\overline{BC}_<(M)$ is the complex that results from removing the cone points of the broken circuit complex. Whenever $M$ is connected, the top $h$ entry of $\overline{BC}_<(M)$ is $\beta(M)$, the \emph{beta invariant} of the matroid.
    \item The top $h$ entry of its order complex is equal to $\mu(\LL(M))$, the \emph{M\"obius number} of the poset of lattice of flats. 
\end{itemize}
\end{proposition}

\begin{definition}[Graphical matroids]
Given a graph $G=(V,E)$, we can define a matroid $M(G)$ on the edge set, $E$, by letting the rank of a subset $A\subset E$ be the size of the largest forest contained in the subgraph induced by $A$. Equivalently, we can define the circuits to be the cycles. 
\end{definition}

\begin{remark}\label{rem:maximizer}
Notice that the maximizer of Lemma \ref{lem:hmax}, $\partial\Diamond_d$, is in fact the independence complex of the graphical matroid given by a path of length $d$ with each edge doubled. See Figure \ref{fig:max}.
\begin{figure}[ht]
\centering
\includegraphics{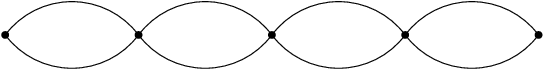}
\caption{Graph whose graphical matroid has independence complex equal to $\partial \Diamond_4$}\label{fig:max}
\end{figure}
\end{remark}

\begin{example}\label{ex:uniform}
Consider the graph $C_{d+1}$ given by a single $(d+1)$-cycle. In the matroid $M(C_{d+1})$ any \emph{proper} subset of $E$ is independent, so the independence complex is $\hat{\Gamma}_d$.
\end{example}

Note that one cannot drop the dimension assumption from Theorem~\ref{thm:main}, since $\tilde\chi(\II(M(C_{d+1})))=1$ for every $d$, as the previous Example show. 

\begin{example}\label{ex:ed}
Consider the graphical matroid $M$ given by the graph in Figure \ref{fig:ed}.
\begin{figure}[ht]
\includegraphics{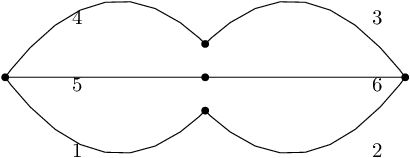}
\caption{Graph in Example \ref{ex:ed}}
\label{fig:ed}
\end{figure}
The circuits are $[1234],[1256],[3456]$ so the broken circuits are $[234],[256],[456]$.\\ 
\textbf{Independent complex:} The bases are $$[1245],[1246],[1235],[1236],[1345],[1346],[1356],[1456],[2345],[2346],[2356],[2456].$$
The $h$-vector is $(1,2,3,4,2)$ so $\II(M)$ complex is homotopy equivalent to the wedge of two spheres of dimension three.\\
\textbf{Broken circuit complex:}
The bases containing no broken circuits are 
$$[1245],[1246],[1235],[1236],[1345],[1346],[1356].$$
The $h$-vector is $(1, 2, 3, 1, 0)$. The zero at the end comes from the fact that we have a cone over the vertex $1$. After removing it, the reduced (see below) broken circuit complex, $\overline{BC}_<(M)$, has $h$-vector $(1,2,3,1)$, so it is homotopy equivalent (but not homeomorphic)  to a two dimensional sphere.
\end{example}

%The broken circuit complex turns out to be an iterated cone over a non-contractible space whose top Betti nuumber is $\beta(M)$, the beta invariant of $M$. The number of cone points equals the number of connected components of the matroid as shown in \cite{bjorner}. The {\bf reduced} broken circuit complex $\overline{BC}_<(M)$ is the complex that results from removing the cone points of the broken circuit complex. For simplicity we only work with connected matroids, i.e., matroids whose independence complex cannot be decomposed as a join of two non-trivial complexes. 

\begin{remark}
We already mentioned in the introduction that it is known that every independence complex arises as a broken circuit complex \cite[Theorem 4.2]{BCC}. Furthermore, the class of independence complexes is \emph{strictly} contained in the class of (reduced) broken circuit complexes. To see this strict containment we go back to Example~\ref{ex:ed}. By \cite[Theorem 3]{Chari} if an independence complex is homotopy equivalent to a sphere, then it is a PS-sphere. The $h$-vector of any PS-sphere is always symmetric so the $h$-vector of the reduced broken circuit complex in Example \ref{ex:ed} is \emph{not} the $h$-vector of any independence complex.
\end{remark}

For any matroid $M$, the independence complex $\II(M)$ is PS-ear decomposable \cite[Theorem 3]{Chari}. This provides a topological difference between independence and broken circuit complexes. Indeed, it follows from the work of Swartz \cite{swartz} that it is \emph{false} for broken circuit complexes.

\subsection{Geometric lattices}\label{sec:geomlattices}

For any matroid $M$ we have a partially ordered set (by inclusion) on the set of flats. These posets are characterized by certain properties; they are precisely the \emph{geometric lattices}. We need some more terminology. 

Let $\PP$ be a finite poset. A poset is bounded if it has a minimum and a maximum, i.e., elements $\hat 0, \hat 1$ , such that $\hat 0\le x \le \hat 1$ for every element of the poset. If $x\neq y$, we say that $x$ covers $y$, denoted $y\precdot x$, if $y\preceq x$ and there is no $z$ different from $x$ and $y$ such that $y\preceq z\preceq x$. An \emph{atom} is an element $x$ such that $\hat{0}\precdot x$. We usually represent a poset through its Hasse diagram, i.e., by drawing an edge between two elements whenever one covers the other.

\begin{definition}
The M\"obius function on a poset $P$ is the unique function $\mu: P\times P\to\Zz$ such that
\[
\mu(x,y)=\begin{cases}
1,\quad x=y.\\
-\sum_{x\preceq z\prec y}\mu(x,z),\quad x\prec y.\\
0,\quad\text{ else.}
\end{cases}
\]
If the poset is bounded then $\mu(P):=\mu(\hat 0,\hat 1).$
\end{definition}

Given two elements $x,y$ we denote by $x\vee y$ their \emph{join}, an element such that $x\preceq z$ and $y\preceq z$ imply $x\vee y\preceq z$. Dually we can define $x\wedge y$ as the \emph{meet}. These operations are binary and associative so it makes sense to talk about the meet or join of any finite subset.

Two elements $x,y$ of $P$ are said to have a \emph{join} if there is an element $x\vee y$ such that if $x\prec z$ and $y\prec z$ if and only if $x\vee y \prec z$. Dually, we can define the \emph{meet} of two elements $x,y$, denoted by $x\wedge y$, whenever it exists. A \emph{lattice} is a poset in which every pair of elements has a join and a meet. As binary operations, joins and meets are associative hence it makes sense to talk about meets and joins of arbitrary subsets of $P$.\\ A chain of length $k$ in a poset $P$ is a collection of distinct elements $x_0\prec x_1 \prec\cdots\prec x_k$. A chain is saturated if the all the relations involved are covering relations. Every lattice is bounded: the meet of all elements is a minimum while the join of all elements is a maximum. A bounded poset is \emph{graded} if every saturated chain starting at $\hat0$ and ending at $\hat 1$ has the same length. The rank of an element $x$ in a graded poset is the length of any saturated chain starting at $\hat 0$ and ending in $x$.
\begin{definition}
A lattice $L$ is said to be geometric if it satisfies the following conditions
\begin{enumerate}
\item It is graded.
\item Its rank function $r$ is semimodular, i.e.,  for every $x,y\in L$ the following inequality holds:
 \begin{equation*}r(x\vee y)+r(x\wedge y)\leq r(x)+r(y).\end{equation*}
\item It is atomistic, i.e., every element is the join of a set of atoms.
\end{enumerate}
\end{definition}

For notational purposes we declare $r(\hat{0})=-1$, so that for instance the atoms have rank equal to zero.
Assigning the poset $\LL(M)$ to each matroid $M$ induces a one-to-one correspondence between geometric lattices and simple matroids \cite[Theorem 1.7.5]{Oxley-book}.  

Every poset $\PP$ gives a simplicial complex $\OO(\PP)$, called the order complex of $\PP$, in the following way: Its elements are the elements of $\PP\backslash\{\hat{0},\hat{1}\}$ and the faces are the chains ordered by inclusion. In a graded poset $\PP$ of rank $r$ all maximal chains in $\PP\backslash\{\hat{0},\hat{1}\}$ have length $r-1$, hence the $\OO(\PP)$ has dimension $r-2$.
As mentioned before, the order complex of a geometric lattice $L$ is shellable. We close this section by providing a description of $|\tilde\chi(\OO(L))|$ following \cite{bjorner}.

Let $m$ be the number of atoms in $L$ and choose an arbitrary bijection between atoms and $[m]$ so we can label atoms with positive integers. Let $E(L)$ be the set of edges of the Haase diagram. Define a labelling $\lambda: E(L)\longrightarrow \mathbb{Z}$ as follows: if $x\succdot y$ then $\lambda(y,x)$ equals the smallest atom $a$ such that $a\preceq x$ but $a\not\preceq y$. A \emph{descending chain} is a chain $\hat{0}=x_0\precdot x_1\precdot\cdots\precdot x_r=\hat{1}$, such that $\lambda(x_{i-1},x_{i})>\lambda(x_{i},x_{i+1})$ for $1\leq i\leq r-1$.

Let $L$ be a geometric lattice. We have that $|\tilde\chi(\OO(L))|=|\mu(\hat{0},\hat{1})|$, where $\mu$ denotes the M\"obius function of $L$ (Philip Hall's theorem \cite[Proposition 3.8.5]{EC1}), and this quantity is also equal to the number of descending chains \cite[Theorem 7.6.4.]{bjorner}.

Notice that this implies that the number of descending chains is independent of the ordering of the atoms.
\begin{example}
Let $M$ be the matroid given by the affine point configuration in the left part of Figure \ref{fig:matroid}. The lattice of flats together with two descending chains are shown right next. We include the whole M\"obius function computation and the order complex which is a graph.
\begin{figure}[ht]
\centering
\includegraphics{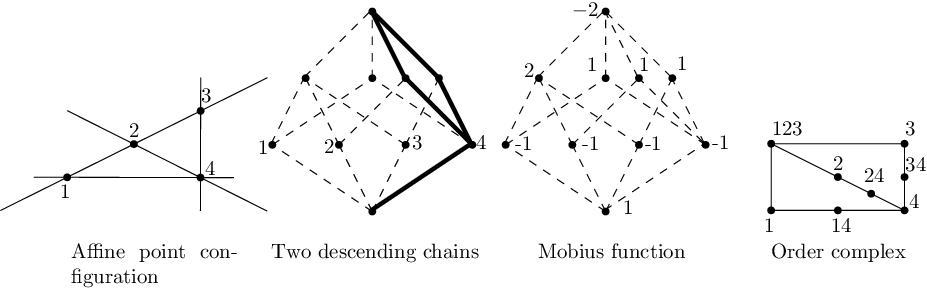}
\caption{An affine point configuration with four numbered atoms and its corresponding lattice of flats.}\label{fig:matroid}
\end{figure}

\end{example}

\section{Independence Complexes}\label{sec:ind}

This section is devoted to various proofs of Theorem~\ref{thm:main}. Quite surprisingly the result is a simple consequence of several known theorems in matroid theory. 

\begin{definition}
Let $\Psi_{d,k}$ be the set of all isomorphism classes of \emph{loopless} matroids $M$ such that $\dim(\II(M))=d-1$ and $|\tilde\chi(\II(M))|=k$.
\end{definition}
Each of the following proofs sheds a light on different aspects of $\Psi_{d,k}$. We begin with a proof using some theorems presented in the survey \cite{bjorner}. These seem to be the oldest family of results that actually suggest the property for matroids. 

\begin{proof}[First proof of Theorem \ref{thm:main}] A matroid is coloop free if and only if $h_d>0$ (see for example \cite[Lemma 3.7]{SamKlee}), so we can assume it is coloop free. By Theorem 7.8.4 and Corollary 7.8.5 in \cite{bjorner}, there is a basis for the homology group $H_{d-1}(\II(M))$ consisting of cycles whose supports are the facets of PS-spheres; furthermore every basis of the complex is in the support of one such cycle. As explained in Chari's original work \cite{Chari}, the number of isomorphism classes of $PS$-spheres of dimension $d$ is counted by the number of integer partitions of $d+1$ and each such $PS$-sphere has at most $2^d$ facets. It follows that the number of bases of $M$ is bounded above by $2^dh_d$. 
\end{proof}

\begin{remark}Notice that the previous bound is far from tight: bases are overcounted and an intricate inclusion/exclusion process is needed. Little is known about the types of spheres in the bases and how they intersect, so %we believe it is unlikely to make this argument sharper. 
at present time is not clear to us how to make this argument sharper.

Bj\"orner also shows \cite[Proposition~7.5.3]{bjorner} that if $M$ is connected and has no coloops, then $h_d \ge h_1$. The proof is inductive and uses the Tutte-Polynomial. It is not clear if this is in general tight, but it tells us that if we restrict to connected matroids, then the bounds are different: below we present examples of matroids with $h_1 = h_d + d -1$. 
\end{remark}
Theorem \ref{thm:main} implies the existence of upper bounds for each entry of the $h$-vectors and $f$-vectors of a matroid in terms of its dimension and its Euler characteristic. In \cite{superEd} Swartz proved the following inequalities in a more general set up (see Section \ref{sec:broken}). We now prove the following special case using PS ear decompositions. The advantage of this approach is that we can analyze the equality case.  
\begin{theorem}\label{thm:hbound}
Let $M\in\Psi_{d,k}$ we have the following inequalities:
\begin{enumerate}
\item $h_i(\II(M)) \leq \binom{d}{i} + (k-1)\binom{d-1}{i-1},$ for $0\leq i\leq d$.
\item $f_i(\II(M)) \leq \binom{d}{i+1}2^{i+1} + (k-1)\binom{d-1}{i}2^{i},$ for $-1\leq i\leq d-1$.
\end{enumerate}
Furthermore, these inequalities are tight.
\end{theorem}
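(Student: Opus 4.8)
The plan is to read the $h$-vector directly off Chari's PS-ear decomposition (Theorem~\ref{thm:chari}) and bound each piece with Lemma~\ref{lem:hmax}. Since $M\in\Psi_{d,k}$, Remark~\ref{rem:ear} shows that any PS-ear decomposition $\Delta_0\subset\cdots\subset\Delta_{k-1}=\II(M)$ has exactly $k-1$ ears, with $\Delta_0$ a $(d-1)$-dimensional PS-sphere and the $j$-th ear a PS-ball $K_j\cong\Sigma_j\ast\Gamma_{\ell_j}$ whose spherical factor $\Sigma_j$ is a PS-sphere of dimension $d-2-\ell_j$. By Lemma~\ref{lem:hvectorear}, attaching the $j$-th ear bumps the $h$-polynomial by $t^{\ell_j+1}h(\Sigma_j,t)$, so that
\[
h(\II(M),t)=h(\Delta_0,t)+\sum_{j=1}^{k-1}t^{\ell_j+1}h(\Sigma_j,t),
\]
and comparing coefficients of $t^i$ gives $h_i(\II(M))=h_i(\Delta_0)+\sum_{j=1}^{k-1}h_{i-\ell_j-1}(\Sigma_j)$.

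For the first term, Lemma~\ref{lem:hmax} gives $h_i(\Delta_0)\le\binom{d}{i}$. For each ear, applying Lemma~\ref{lem:hmax} to the $(d-2-\ell_j)$-dimensional PS-sphere $\Sigma_j$ yields $h_{i-\ell_j-1}(\Sigma_j)\le\binom{d-1-\ell_j}{i-\ell_j-1}$ (this is $0$ when $i-1<\ell_j$). The crux is then the elementary inequality
\[
\binom{d-1-\ell_j}{\,i-\ell_j-1\,}\le\binom{d-1}{\,i-1\,},
\]
valid whenever $0\le\ell_j\le i-1\le d-1$, which follows from $\binom{d-1}{i-1}\big/\binom{d-1-\ell_j}{i-\ell_j-1}=\prod_{t=0}^{\ell_j-1}\frac{d-1-t}{i-1-t}\ge 1$ using $d-1\ge i-1$. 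Summing the $k-1$ resulting bounds $\binom{d-1}{i-1}$ together with $\binom{d}{i}$ gives inequality~(1). It will matter for tightness that, by the uniqueness clause of Lemma~\ref{lem:hmax}, equality holds in~(1) for every $i$ simultaneously exactly when $\Delta_0=\partial\Diamond_d$, every $\ell_j=0$, and every $\Sigma_j=\partial\Diamond_{d-1}$.

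Inequality~(2) I would deduce from~(1) by the standard conversion $f_i(\II(M))=\sum_{j=0}^{i+1}\binom{d-j}{i+1-j}\,h_j(\II(M))$, whose coefficients are nonnegative. Substituting the bound from~(1) and using the absorption identity $\binom{d}{j}\binom{d-j}{i+1-j}=\binom{d}{i+1}\binom{i+1}{j}$ together with its shift $\binom{d-1}{j-1}\binom{d-j}{i+1-j}=\binom{d-1}{i}\binom{i}{j-1}$ collapses the two sums, via the binomial theorem, to $\binom{d}{i+1}2^{i+1}$ and $(k-1)\binom{d-1}{i}2^i$. Because~(2) is the image of~(1) under a linear map with nonnegative coefficients, equality throughout~(1) forces equality throughout~(2).

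Finally, I would prove tightness by exhibiting a single matroid in $\Psi_{d,k}$ attaining the extremal vector. The key observation is that the right-hand side of~(1) factors,
\[
\binom{d}{i}+(k-1)\binom{d-1}{i-1}=[t^i]\big((1+t)^{d-1}(1+kt)\big),
\]
and $1+kt$ is the $h$-polynomial of the independence complex of the rank-one uniform matroid $U_{1,k+1}$ on $k+1$ elements, while $(1+t)^{d-1}$ is that of $\partial\Diamond_{d-1}$, realized by the rank-$(d-1)$ doubled-path matroid $M'$ of Remark~\ref{rem:maximizer}. Since independence complexes turn direct sums into joins and hence $h$-polynomials into products, $M=U_{1,k+1}\oplus M'$ is loopless of rank $d$ with $h(\II(M),t)=(1+t)^{d-1}(1+kt)$; in particular $h_d=k$, so $M\in\Psi_{d,k}$ and it meets every bound in~(1) and~(2) with equality. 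The main obstacle is the upper-bound direction, and precisely the per-ear estimate: a priori an ear with $\ell_j>0$ trades the lower-dimensional sphere $\Sigma_j$ for a larger shift $\ell_j+1$, and one must rule out that this beats the $\ell_j=0$ configuration — this is exactly what the binomial inequality above controls, and it is the only place where the hypothesis $i\le d$ is needed.
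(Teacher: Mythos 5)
Your proof is correct and follows essentially the same route as the paper's: Chari's PS-ear decomposition, the crosspolytope bound of Lemma~\ref{lem:hmax} applied to the initial sphere and to each ear, the nonnegative $h$-to-$f$ conversion, and the same extremal matroid (your $U_{1,k+1}\oplus M'$ is precisely the paper's $V_{d,k}$, the doubled path with one parallel class enlarged). You are in fact more careful than the paper at two points it glosses over --- the binomial inequality showing ears with $\ell_j>0$ never beat the $\ell_j=0$ configuration, and the explicit absorption-identity computation for the $f$-vector --- and your increment $t^{\ell_j+1}h(\Sigma_j,t)$ is the correct form of Lemma~\ref{lem:hvectorear}, whose stated version $t^{\ell+1}h(\partial K,t)$ only agrees with it when $\ell_j=0$ (the only case the paper actually uses).
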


\begin{proof}
We begin with the first part. We will use \cite[Theorem 3]{Chari}, i.e., the fact that $\II(M)$ is PS ear decomposable. To begin with, there is a unique $h$-vector maximizer among the PS spheres $\Delta_0$; namely it is the boundary of a $d$-dimensional crosspolytope and its $h$-vector is given by the binomial coefficients (Lemma \ref{lem:hmax}). By Lemma \ref{lem:hvectorear}, together with Lemma \ref{lem:hmax}, the way to attach a PS ear with maximal resulting $h$-vector is by attaching a PS ball $\Sigma*\Gamma_0$ with $\Sigma$ isomorphic to $\partial\Diamond_{d-1}$. We now show that this maximal bound can be attained.

Set $\Delta_0$ to be $\partial\Diamond_d$. Fix a vertex $v\in\Delta_0$ and attach an ear using the PS ball $\Sigma\ast\Gamma_0$, where $\Sigma$ is the link of $v$ (which is isomorphic to $\partial\Diamond_{d-1}$) and $\Gamma_0$ is just a single new vertex.  We can repeat this process $k$ times, always using the same link of the original vertex $v$. The simplicial complex obtained in this way is the independence complex of matroid. Our choice of $\Delta_0$ is the independence complex of the graphical matroid described in Remark \ref{rem:maximizer}. Each ear attachment corresponds to adding parallel elements to a fixed edge. We denote this matroid by $V_{d,k}$.

The second part follows from the fact that $V_{d,k}$ also maximizes each entry of the $f$-vector. This is because by Remark \ref{rem:hpositive} the $f$-vector is a positive combination of the $h$-vector.
\begin{figure}[h]
\centering
\includegraphics{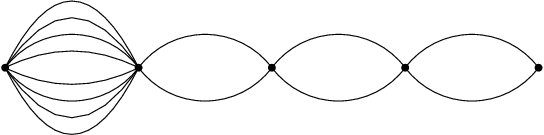}
\caption{The graphical matroid $V_{4,7}$.}
\end{figure}
\end{proof}

\begin{corollary}\label{cor:maximizer}
Among all independence complexes of matroids in $\Psi_{d,k}$, the independence complex of $V_{d,k}$ is the unique maximizer of the $h$-vector.
\end{corollary}

\begin{proof}
If we attach $k-1$ ears then the number of vertices is at most $v+k-1$ where $v$ is the number of vertices of $\Delta_0$. We have $v\leq 2d$ since $\Delta_0$ is a PS sphere and equality only happens for $\Delta_0\cong \partial \Diamond_d$. It follows that the maximum number of vertices among elements of $\Psi_{d,k}$ is $2d+k-1$ with equality only if we start with the boundary of a crosspolytope and each ear attached is of the form $\Sigma\times\Gamma_0$, i.e., every ear introduces a new vertex.\\

The $1$-skeleton of an independence complex is a complete multipartite graph between the parallel classes. Therefore if after attaching all the ears the resulting complex is the independence complex of a matroid, then every new vertex (vertices not belonging to $\Delta_0$) is now part of a single parallel class.
\end{proof}

Now we can give another proof of Theorem \ref{thm:main}.

\begin{proof}[Second Proof of Theorem \ref{thm:main}]
We have $h_d(\II(M))=|\tilde\chi(\II(M))|$, so Theorem \ref{thm:hbound} gives $f_0(\II(M))\leq 2d+h_d(\II(M))-1$. Fixing $h_d(\II(M))$ and $d$ bounds the number of vertices $\II(M)$ can have, whence the result follows.
\end{proof}

In contrast to the case of the Upper Bound Theorem for spheres (see \cite{UBC}), $V_{d,k}$ is the unique maximizer up to isomorphism. However, the matroid $V_{d,k}$ is perhaps not very interesting from the matroid theoretic perspective (for instance the lattice of flats of $V_{d,k}$ is the boolean lattice $B_d$). A relevant variant, which we expect to be harder, is the analogous question over the family of \textbf{simple} matroids.
\begin{question}What is the maximal value of $h_j(\II(M))$ when $M$ ranges over all simple matroids of $\Psi_{d,k}$? Is there a single simple matroid that simultaneously maximizes all the $h$-vector entries ? What if we further restrict to the class of simple connected matroids? 
\end{question}
In light of the above question, we notice that for simple matroids, the number of vertices is strictly less than $2d+ h_d(\II(M)) -1$ which is the tight upper bound for general matroids. 
\begin{corollary}\label{cor:h1max}
 If $M$ is a matroid with $f_0(\II(M)) = 2d+h_d(\II(M)) -1$, then $M$ is isomorphic to $V_{d,k}$. 
\end{corollary}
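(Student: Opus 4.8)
The plan is to recognize the hypothesis as the equality case of Theorem~\ref{thm:hbound}(1) at $i=1$ and then peel off the extremal structure. Since $h_1(\II(M)) = f_0(\II(M)) - d$, the condition $f_0(\II(M)) = 2d+k-1$ says exactly that $h_1(\II(M)) = d+(k-1)$, which is the value of the bound $\binom{d}{1}+(k-1)\binom{d-1}{0}$. So I would first revisit the proof of Theorem~\ref{thm:hbound} and track when equality can hold. Using the PS-ear decomposition $\Delta_0 \subset \Delta_1 \subset \cdots \subset \Delta_{k-1} = \II(M)$ from Theorem~\ref{thm:chari}, Lemma~\ref{lem:hvectorear} shows that an ear $K_j = \Sigma_j * \Gamma_{\ell_j}$ changes the coefficient of $t$ by the coefficient of $t$ in $t^{\ell_j+1}h(\partial K_j,t)$, which equals $h_0(\partial K_j)=1$ when $\ell_j=0$ and $0$ when $\ell_j\ge 1$. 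Meanwhile a PS-sphere $\hat\Gamma_{d_1}\ast\cdots\ast\hat\Gamma_{d_m}$ has $h_1 = m \le d$ (each factor contributes $1$), with equality iff all $d_i=1$, i.e.\ iff $\Delta_0 = \partial\Diamond_d$ by the uniqueness in Lemma~\ref{lem:hmax}. Hence equality in $h_1 = d+(k-1)$ forces $\Delta_0 = \partial\Diamond_d$ and $\ell_j = 0$ for all $k-1$ ears; that is, every ear attaches a single new vertex coned over a $(d-2)$-dimensional PS-sphere.

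The endgame I have in mind is purely matroidal and elementary, \emph{provided} one knows the structural fact that an extremal $M$ is the direct sum of its parallel classes, equivalently that its simplification is the free matroid (equivalently $\LL(M)\cong B_d$, the Boolean lattice, which is the property the paper records for $V_{d,k}$). Writing $M = U_{1,c_1}\oplus\cdots\oplus U_{1,c_d}$ with each $c_i\ge 2$ (coloop-freeness), the multiplicativity of the $h$-polynomial under joins gives $h_d(\II(M)) = \prod_i (c_i-1)$ and $f_0(\II(M)) = \sum_i c_i$. Setting $a_i = c_i-1\ge 1$, the hypotheses read $\prod_i a_i = k$ and $\sum_i a_i = (2d+k-1)-d = d+k-1$. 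Replacing any two factors $a,b\ge 2$ by $ab$ and $1$ strictly increases the sum (the change is $(a-1)(b-1)\ge 1$) while preserving the product, so among all tuples with product $k$ the sum is maximized only by $(k,1,\dots,1)$, whose sum is exactly $d+k-1$. Thus equality forces one $c_i = k+1$ and the rest equal to $2$, i.e.\ $M\cong V_{d,k}$.

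The route I propose to the direct-sum structure is an induction on $k$, combining the single-apex picture above with deletion of a parallel element. The base case $k=1$ is immediate, since with no ears $\II(M)=\Delta_0=\partial\Diamond_d = \II(U_{1,2}^{\oplus d}) = \II(V_{d,1})$. For the inductive step I would delete one element $e'$ from a parallel class of size $\ge 3$, forming $M^- = M\setminus e'$, and let $e$ be a remaining element of that class. Reattaching $e'$ is the cone gluing $\II(M) = \II(M^-)\cup\bigl(e' \ast \II(M^-/e)\bigr)$ along the link $\II(M^-/e)$, and since a cone is contractible, additivity of the reduced Euler characteristic gives, in top degree, $h_d(\II(M)) = h_d(\II(M^-)) + h_{d-1}(\II(M^-/e))$. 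If $M^-$ is coloop-free, Theorem~\ref{thm:hbound}(1) applied to $M^-$ gives $h_d(\II(M^-))\ge f_0(\II(M^-))-2d+1 = k-1$, while the relation above gives $h_d(\II(M^-))\le k$; the only value making $M^-$ again extremal (so that the inductive hypothesis yields $M^-\cong V_{d,k-1}$) is $h_d(\II(M^-))=k-1$, forcing $h_{d-1}(\II(M^-/e))=1$. Finally $h_{d-1}(\II(V_{d,k-1}/e)) = 1$ holds precisely when $e$ lies in the large parallel class of $V_{d,k-1}$ (contracting a size-$2$ class instead leaves $U_{1,k}\oplus U_{1,2}^{\oplus(d-2)}$, whose top $h$-number is $k-1\ge 2$ once $k\ge 3$), so $e'$ enlarges that class and $M\cong V_{d,k}$.

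The hard part, and the place I expect to spend the most effort, is exactly the middle structural claim: that an extremal $M$ is a direct sum of its parallel classes. Concretely, in the induction I must guarantee a parallel class of size $\ge 3$ from which to delete while keeping $M^-$ coloop-free and extremal, and I must exclude $h_{d-1}(\II(M^-/e))=0$, i.e.\ that the contraction acquires a coloop — a delicate point when $M$ is not yet known to be a direct sum. My intended way around this is the single-apex structure of the first paragraph: $\Delta_0=\partial\Diamond_d$ already realizes the free simplification, and each subsequent ear adds exactly one vertex whose link is a $(d-2)$-sphere, so I would argue that such an attachment can only reproduce the link of an existing vertex and hence merely enlarge an existing parallel class, never raising the number of classes above $d$ nor creating a circuit in the simplification. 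Making this rigidity precise — that attaching a single-apex PS-ear to a matroid complex with Boolean flat lattice again yields one with Boolean flat lattice — is the crux of the whole argument, and everything else is bookkeeping.
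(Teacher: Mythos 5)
Your proposal correctly identifies the two ends of the argument, but the middle --- which you yourself call ``the crux of the whole argument'' --- is missing, and it is not a technicality: it is the entire content of the corollary. Your first paragraph (equality in Theorem~\ref{thm:hbound} at $i=1$ forces $\Delta_0=\partial\Diamond_d$ and $\ell_j=0$ for all $k-1$ ears) is a correct reading of Lemmas~\ref{lem:hmax} and~\ref{lem:hvectorear}, Remark~\ref{rem:ear} and Theorem~\ref{thm:chari}, and your second paragraph (the replacement argument forcing one parallel class of size $k+1$ and the rest of size $2$) is correct \emph{given} that $M$ is a direct sum of parallel classes. But nothing you write connects the two: extremality of the complex does not by itself yield the matroid-level direct-sum structure. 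Worse, the rigidity statement you propose to prove --- that attaching a single-apex PS-ear to a matroid complex with Boolean lattice of flats yields another such matroid complex --- is not even the statement available to you: Theorem~\ref{thm:chari} only asserts that $\II(M)$ admits \emph{some} PS-ear decomposition; its intermediate complexes $\Delta_j$ need not be independence complexes of matroids, and the decomposition is not canonical, so there is no matroid on which that induction can run. Your fallback deletion induction has the two holes you flag yourself: you cannot exhibit a parallel class of size at least $3$ (that presupposes the structure you are trying to establish), and you cannot rule out $h_{d-1}(\II(M^-/e))=0$. For context, the paper records this corollary with no separate proof, as the equality case of Theorem~\ref{thm:hbound} following the assertion that $V_{d,k}$ is the unique maximizer; so the step you are missing is precisely the step that needs to be written down.

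The gap can be closed without any ear rigidity. Since $h_d(\II(M))=k\geq 1$, the matroid $M$ is coloop-free; decompose it into connected components $M=M_1\oplus\cdots\oplus M_p$ with ranks $d_j$ and ground-set sizes $n_j$, so each $M_j$ is loopless, connected and coloop-free, whence $n_j-d_j\geq 1$ and, by Bj\"orner's inequality quoted after the first proof of Theorem~\ref{thm:main} (\cite[Proposition~7.5.3]{bjorner}), $h_{d_j}(\II(M_j))\geq h_1(\II(M_j))=n_j-d_j$. Multiplicativity of $h$-polynomials under joins gives $\prod_j h_{d_j}(\II(M_j))=k$, and your own replacement inequality $\sum_j b_j\leq (p-1)+\prod_j b_j$ for positive integers $b_j$ yields
\[
d+k-1 \;=\; f_0(\II(M))-d \;=\; \sum_{j=1}^{p}(n_j-d_j)\;\leq\;(p-1)+\prod_{j=1}^{p}(n_j-d_j)\;\leq\;(p-1)+k\;\leq\;(d-1)+k,
\]
using $p\leq\sum_j d_j=d$ at the last step. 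Equality throughout forces $p=d$, hence every $d_j=1$ and every component is a parallel class $U_{1,c_j}$; it also forces $\prod_j(c_j-1)=k$ with at most one $c_j-1$ exceeding $1$, which is exactly your endgame arithmetic and gives $M\cong V_{d,k}$. Replacing your unproved rigidity claim by this component decomposition turns your proposal into a complete proof; the first and last paragraphs of your write-up can stand as they are.
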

We now present another proof of the main theorem that may be more suitable for studying the simple case and/or the broken circuit complexes.

\begin{proof}[Third proof of Theorem \ref{thm:main}] Choose an order $<$ on the vertex set of $M$, and consider the poset $\text{Int}_<(M)$. It is graded, the number of elements of degree $i$ is $h_i$ and all the maximal elements are of degree $d$ (since it is a greedoid or a graded lattice minus the top element). Since the elements of the posets are sets ordered by inclusion and graded by cardinality, the number of atoms is at most $d$ times the number of bases of rank $d$ in the poset, in terms of $h$-numbers it means that $h_1 \le dh_d$. 
\end{proof}

\begin{remark} The inequalities obtained from this method are far from tight (Theorem \ref{thm:hbound} gives the stronger inequality $h_1\leq d-1+h_d$). Indeed the equality case would need disjoint bases which cannot happen. The structural properties of $\text{Int}_<(M)$ are quite strong, but barely used. 
\end{remark}

Lastly we present a proof of the main theorem which allows us to say something about the size of $\Psi_{d,k}$.
\begin{proposition}\label{prop:howmany}
Let $d,k>0$ and let $T_{d,k}$ be the number of matroids of rank at most $d$ with at most $k$ bases. Then $2^dkT_{d,k}\ge |\Psi_{d,k}|$.
\end{proposition}

\begin{proof}Given a matroid $M$ and a basis $B$, Corollary~3.5  in \cite{SamKlee} shows that the $h$-polynomial of the independence complex of $M$ can be decomposed as: 
\[ h(\II,x) = \sum_{I} x^{|I|}h(\text{link}_\II(I)|_B, x).\]
The sum is taken over the independent sets $I$ of $M$ that are disjoint from $B$. Lemma~3.8 in \cite{SamKlee} shows that all maximal such $I$ under inclusion, i.e., the bases of the induced matroid on $E\backslash B$, satisfy that $h_{d-|I|}(\text{link}_\II(I)|_B) \not= 0$. It follows that $h_d(M)$ is bounded below by the number of bases of $M|_{E-B}$. This implies that there are at most $k$ maximal bases. Together with the fact that the rank of the restriction is bounded above by $d$, this implies that the number of possible restrictions is finite. The remaining independent sets consist of a subset of $B$ together with an element of the restriction, thus the number of matroids with $h_d = k$ is bounded above by $2^dkT_{d,k}$, where $T_{d,k}$ is the number of matroids of rank at most $d$ with at most $k$ bases. 
\end{proof}

Notice that this provides a fourth proof of Theorem \ref{thm:main}.

\begin{remark} The bounds are far from tight. First of all, it is to be expected that the larger the number of bases of $\Delta_{E\backslash B}$, the fewer ways there are to complete to a matroid. More careful analysis can be carried to replace the power of $2$, but basic asymptotics of binomial coefficients tell us that the replacement is still exponential. An estimate of $T_{d,k}$ is not known, but it seems like estimating it is a more tractable problem. In particular, it is a simple consequence of the exchange axiom that the values stabilize for fixed $k$ and large values of $d$. 
\end{remark}
In general, it follows from \cite[Theorem 3]{Chari} that $|\Psi_{d,1}| = p(d)$, the number of integer partitions of $d$. Consequently, the best kind of formula we can expect for the cardinality of $\Psi_{d,k}$ is asymptotic. It is unclear that the value of $\Psi_{d,k}$ is monotone in either of the parameters. At least the construction of $V_{d,k}$ shows that $\Psi_{d,k}\neq\emptyset$. Using the same ideas we can say a little more.
\begin{lemma}\label{lem:many}
$|\Psi_{d,1}| \le |\Psi_{d,k}|$ for every positive integer $d$.
\end{lemma}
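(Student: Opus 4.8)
The plan is to exhibit an explicit injection $\Phi\colon \Psi_{d,1}\hookrightarrow\Psi_{d,k}$; since $|\Psi_{d,1}|=p(d)$ this immediately gives the inequality. When $k=1$ there is nothing to prove, so I would assume $k\ge 2$. By Theorem~\ref{thm:chari} a matroid $N\in\Psi_{d,1}$ has its independence complex equal to a single PS-sphere $\hat\Gamma_{d_1}\ast\cdots\ast\hat\Gamma_{d_r}$ with $d_1\ge\cdots\ge d_r$ and $\sum d_i=d$, and because a loopless matroid is determined by its independence complex this identifies $N$ with the direct sum $N_\lambda=U_{d_1,d_1+1}\oplus\cdots\oplus U_{d_r,d_r+1}$ indexed by the partition $\lambda=(d_1,\dots,d_r)\vdash d$ (this is the content of the identity $|\Psi_{d,1}|=p(d)$ recalled above).

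The map $\Phi$ generalizes the construction of $V_{d,k}$ in Theorem~\ref{thm:hbound}: fix an element $e$ in a largest summand $U_{d_1,d_1+1}$ of $N_\lambda$ and let $M_\lambda$ be the matroid obtained by adjoining $k-1$ new elements each parallel to $e$. First I would check that adjoining one parallel copy $e'$ of $e$ realizes a PS-ear attachment in the sense of Definition~\ref{def:ear}: the faces of the independence complex containing the new vertex $e'$ are exactly $\{e'\}\cup F$ with $F\in\link(e)$, so the new complex equals $\Delta\cup K$, where $\Delta$ is the old independence complex, $K=\link(e)\ast\Gamma_0$, and $\Delta\cap K=\partial K=\link(e)$. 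Here $\link(e)=\II(N_\lambda/e)=\hat\Gamma_{d_1-1}\ast\hat\Gamma_{d_2}\ast\cdots\ast\hat\Gamma_{d_r}$ is a PS-sphere of dimension $d-2$, so by Lemma~\ref{lem:hvectorear} the attachment raises the top $h$-number by $h_{d-1}(\link(e))=1$. Since contracting $e$ turns every element parallel to $e$ into a loop, $\link(e)$ is unchanged at every stage; iterating $k-1$ times therefore produces a loopless rank-$d$ matroid $M_\lambda$ with $h_d(\II(M_\lambda))=1+(k-1)=k$, i.e. $M_\lambda\in\Psi_{d,k}$.

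The remaining, and genuinely delicate, step is injectivity of $\Phi$. Here I would use that $M_\lambda$ decomposes uniquely into connected components: a single blown-up circuit $C_1$ (the copy of $U_{d_1,d_1+1}$ carrying the enlarged parallel class of size $k$) together with the untouched circuits $U_{d_i,d_i+1}$ for $i\ge 2$. Each $U_{d_i,d_i+1}$ has corank $1$, whereas $C_1$ has corank $k\ge 2$, so $C_1$ is the unique component of corank $\ge 2$; reading off the ranks of the components then recovers the multiset $\{d_1,\dots,d_r\}=\lambda$, with $d_1$ identified as the rank of $C_1$. Hence $\Phi$ is injective and $p(d)=|\Psi_{d,1}|\le|\Psi_{d,k}|$.

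I expect the main obstacle to be the bookkeeping in the injectivity step: one must ensure that enlarging a parallel class inside one summand never accidentally reproduces the component multiset of a different partition, which is precisely what the corank dichotomy (corank-$1$ circuits versus the single corank-$k$ component) rules out once $k\ge 2$. A secondary point to verify carefully is that the choice of $e$ among the elements of a largest summand does not affect the isomorphism type of $M_\lambda$; this follows from the transitivity of the symmetry group of $U_{d_1,d_1+1}$ on its ground set.
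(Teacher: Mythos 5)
Your proposal is correct and follows essentially the same route as the paper: the paper's proof also notes that every matroid in $\Psi_{d,1}$ has a PS-sphere as its independence complex and replicates the $V_{d,k}$ construction (adjoining $k-1$ parallel elements to a chosen element) to produce an injection $\Psi_{d,1}\to\Psi_{d,k}$. You merely spell out the details the paper leaves implicit --- the identification of $\Psi_{d,1}$ with direct sums of circuits indexed by partitions, the verification that each parallel extension is a PS-ear attachment raising $h_d$ by one, and the recovery of the partition from the component data, which makes the map injective.
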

\begin{proof} Since every matroid in $\Psi_{d,1}$ is a PS-sphere, we can choose any vertex $v$ and replicate the construction of $V_{d,k}$ to get an inclusion $\Psi_{d,1}\to \Psi_{d,k}$.
\end{proof}
We have now the ingredients to prove Theorem \ref{thm:howmany}.
\begin{proof}[Proof of Theorem \ref{thm:howmany}]
Combine Proposition \ref{prop:howmany} with Lemma \ref{lem:many}.
\end{proof}

Notice that the previous argument is not strong enough to prove that $\Psi_{d,k} \le \Psi_{d,{k+1}}$ in general (if $d=1$ the number of all such matroids is one). In particular, it would be interesting to find a matroid operation that increases $h_d(\II(M))$ by one in general. The previous construction relies heavily on having a vertex of the independence complex whose link is a sphere. This is, presumably, almost never the case.

\section{Broken Circuit complexes}\label{sec:broken}
Theorem \ref{thm:nbc} is a natural extension of Theorem \ref{thm:main}. Recall that the set of all broken circuit complexes strictly contains the class of independence complexes (see Example \ref{ex:ed}), so any result about $h$-vectors of broken circuit complexes holds for $h$-vectors of independence complexes. In \cite[Theorem 5.4]{superEd} Swartz proved that the inequalities of Theorem \ref{thm:hbound} hold for broken circuit complexes. As a consequence, Theorem \ref{thm:nbc} holds: every $h$-entry is bounded from above by a function of the top $h$-number.

Swartz's proof in \cite{superEd} is an inductive analysis of the Tutte polynomial in which several cases have to be handled separately. Our approaches to the particular case of independence complexes in the previous section has various benefits. We now highlight some of the differences between independence complexes and more general broken circuit complexes.

First we remark that there is no homology basis for $BC_<(M)$ completely extending that in Bj\"orner's theorem. Ziegler \cite[Theorem 1.7]{Ziggy}  constructed a collection of PS-spheres that embed in $\overline{BC} _<(M)$ and generate the top homology group provided that the matroid is connected. Nonetheless, the union of all the constructed spheres does not cover the whole complex, so the first proof of Theorem~\ref{thm:main} cannot be extended using this theorem. For a concrete example, consider the ordered matroid $M$ of Example~\ref{ex:ed}. The $h$-vector of the broken circuit complex is $(1,2,3,1)$, which means that the top homology is one dimensional. If there is a sphere that covers the complex it would have to be equal to it, but that would make the $h$-vector symmetric by the Dehn-Sommerville equations.

Our second proof allowed us to characterize the unique maximizer in Corollary \ref{cor:maximizer}. For general broken circuit complexes we do not know if a similar statement holds. Corollary \ref{cor:h1max} does not hold for broken circuit complexes: Example \ref{ex:ed} shows that $(1,3,2,1)$ and $(1,3,3,1)$ are $h$-vectors of broken circuit complexes. We do not know any bound analogous to Theorem \ref{thm:howmany} for broken circuit complexes.

At the very least, we can prove Theorem \ref{orderId}, which extends the approach of the third proof in Section \ref{sec:ind} using Las Vergnas poset on the set of bases. In order to prove it, we start with a lemma that yields a relation between activities and broken circuits.

\begin{lemma}\label{lem:circuits} Let $(M,<)$ be an ordered matroid and let $C$ be a circuit whose corresponding broken circuit is $\hat C$. If $B$ is any basis with $\hat C \subseteq B$, then $\hat C \subseteq IP(B)$. Furthermore, $\hat C = IP(\hat B)$ for the smallest lexicographic basis $\hat B$ that contains $\hat C$. 
\end{lemma}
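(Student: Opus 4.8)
The plan is to prove the two assertions in turn, letting the first feed directly into the second. Write $C = \{c_1 < c_2 < \dots < c_m\}$, so that $c_1 = \min C$ and $\hat C = C \setminus \{c_1\} = \{c_2, \dots, c_m\}$. The one structural observation I would isolate at the outset is that, since $C$ is a circuit and hence dependent, no basis contains all of $C$; in particular, if $\hat C \subseteq B$ then $c_1 \notin B$. This is exactly what makes a fundamental circuit of $c_1$ available with respect to $B$.

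For the first claim I would fix a basis $B$ with $\hat C \subseteq B$ and look at the unique circuit contained in $B \cup \{c_1\}$, the fundamental circuit of $c_1$. Since $C = \hat C \cup \{c_1\} \subseteq B \cup \{c_1\}$ and $C$ is itself a circuit, uniqueness of the fundamental circuit forces it to equal $C$. The basis-exchange property then shows that $(B \setminus \{c_j\}) \cup \{c_1\}$ is a basis for every $c_j \in C$, in particular for every $c_j \in \hat C$. As $c_1 = \min C < c_j$ for each such $c_j$, this exhibits $c_1$ as a strictly smaller replacement for $c_j$, so $c_j \in IP(B)$, giving $\hat C \subseteq IP(B)$.

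For the second claim I would first note that $\hat C$ is independent (a proper subset of the circuit $C$), so bases containing it exist and the lexicographically smallest such basis $\hat B$ is well defined. Applying the first part to $\hat B$ immediately yields $\hat C \subseteq IP(\hat B)$, so only the reverse inclusion $IP(\hat B) \subseteq \hat C$ remains. For this I would argue by contradiction: suppose some $b \in IP(\hat B)$ lies outside $\hat C$. By internal passivity there is $b' < b$ with $B' := (\hat B \setminus \{b\}) \cup \{b'\} \in \BB(M)$. Since $b \notin \hat C$, we still have $\hat C \subseteq B'$, so $B'$ is another basis containing $\hat C$; and comparing the increasing sortings of $\hat B$ and $B'$, replacing $b$ by the strictly smaller $b'$ produces a sequence that is strictly smaller at its first point of disagreement, so $B'$ is lexicographically smaller than $\hat B$ — contradicting minimality. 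Hence $IP(\hat B) \subseteq \hat C$, and together with the previous inclusion this gives $\hat C = IP(\hat B)$.

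The two workhorse facts — uniqueness of the fundamental circuit and the basis-exchange axiom — are standard, so the only place that needs genuine care is the lexicographic comparison in the last step: one must check that swapping an element out for a strictly smaller one not already present really does decrease the sorted-sequence order, wherever $b'$ is inserted, and that it is precisely $b \notin \hat C$ that preserves $\hat C \subseteq B'$. I expect this small piece of bookkeeping, rather than any deep matroid input, to be the main (and only mild) obstacle; the conceptual content is carried entirely by the identification of the fundamental circuit of $c_1$ with $C$ in the first part.
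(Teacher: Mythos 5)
Your proof is correct, and its first half coincides with the paper's: both identify $C$ as the (unique) fundamental circuit of $c_1=\min C$ with respect to $B$, and use the resulting exchanges $(B\setminus\{d\})\cup\{c_1\}$ with $c_1<d$ to place every $d\in\hat C$ in $IP(B)$. Where you genuinely diverge is the second half. The paper obtains $IP(\hat B)\subseteq \hat C$ by citing a structural theorem: the lexicographic order on bases is a shelling order of $\II(M)$ whose restriction sets are exactly the internally passive sets, so the lex-first basis containing $\hat C$ must have its restriction set contained in $\hat C$. You instead prove this inclusion from scratch: any $b\in IP(\hat B)\setminus\hat C$ admits, by the very definition of internal passivity, a replacement $b'<b$ producing a basis $B'=(\hat B\setminus\{b\})\cup\{b'\}$ that still contains $\hat C$ (precisely because $b\notin\hat C$) and is lexicographically smaller than $\hat B$, contradicting minimality. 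The bookkeeping step you flag does hold: deleting $b$ and inserting $b'<b$ strictly decreases the sorted word at the first index where the two sequences differ. Your route is more elementary and self-contained, needing only the definition of $IP$ and the lex comparison; in effect you are re-proving, in the special case needed, one direction of the restriction-set property of the lex shelling. The paper's route is shorter given that known result and ties the lemma to the structure of $\mathrm{Int}_<(M)$ that the surrounding section exploits, but yours has the advantage of outsourcing nothing.
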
 
\begin{proof} 
Let $c$ be the element in $C\backslash \hat C$. Since $C\subseteq B\cup\{c\}$ any element $d\in \hat C$ can be replaced by $c$ to obtain a new basis. Since $\hat C$ is a broken circuit, we have $c<d$ and therefore $d\in IP(B)$ as desired. 

If $\hat B$ is the smallest lexicographic basis containing $\hat C$ and $\hat C \subseteq IP(\hat B)$, then equality must hold since the lexicographic order is a shelling order with internally passive sets as restriction sets. 

\end{proof}

\begin{proof}[Proof of Theorem \ref{orderId}]
The Lemma \ref{lem:circuits} implies Theorem~\ref{orderId}: the bases whose internally passive sets are broken circuits form an antichain in $\text{Int}_<(M)$. The order ideal of $\text{Int}_<(M)$ whose minimal non elements are these bases consists exactly of the bases not containing a broken circuit, that is the bases that are facets of the broken cirucuit complex. 
\end{proof}

We finish this section with two finiteness results about broken circuit complexes that are interesting on their own and are crucial steps in the proof of Theorem \ref{thm:latticeStrong} below.

\begin{proposition}\label{prop:latticeStrong}
Let $\mathcal{S}_k$ be the the set of isomorphism classes of broken circuit complexes of simple and coloop free matroids with $k$ facets.
For any $k>0$ the set $\mathcal{S}_k$ is finite.
\end{proposition}

\begin{proof}
The proof has three steps: we first bound the number of vertices $v$ in terms of the rank of the matroid $r$ and $k$, then use this bound to give a lower bound for the number of cone points $t$ in terms of $r$ and $k$, and finally we a argue that the any such coloop free matroid has at most $\log_2(k)$ cone points, yielding an upper bound on the number of vertices.

\begin{enumerate}
    \item Let $\Delta$ be a broken circuit complex with $k$ facets. Since broken circuit complexes are shellable the facet ridge graph $G$ is connected. Fix a basis $B$ in such a broken circuit complex. For any other basis $B'$ the distance in $G$ between $B$ and $B'$ is at most $k$ and this distance bounds from above the number of elements of $B'\backslash B$, i.e., $|B'\backslash B|\leq k$. It follows that $v\leq r+(k-1)k$.
    
    \item Fix a basis $B$. For every non cone point $x$ of $B$ there must be at least one different basis $B'$ such that $x\in B\backslash B'$. Since there are $k-1$ bases different from $B$ and for each one at most $k$ elements are not in $B$, then $r-t\leq (k-1)k$, or equivalently, $r-(k-1)k\leq t$.

    \item Finally, recall from Proposition \ref{prop:top} that $t$ is equal to the number of connected components of the matroid. The lattice of flats of $M$ is the product of the lattices of flats of its connected components $M_i$. By \cite[Proposition 3.8.2]{EC1} we have $\mu(\mathcal{L}(M))=\prod_{i=1}^t\mu(\mathcal{L}(M_i))$. Each factor on the right has absolute value at least two: in the convex ear decomposition of Nyman and Swartz \cite[Section 4]{EdNym} the only lattices of flats with M\"obius number $\pm 1$ correspond to Boolean lattices which are excluded by the coloop free assumption. Hence $ |\mu(\mathcal{L}(M))|\geq 2^t$.
    By \cite[Proposition 7.4.5]{bjorner} the $|\mu(\mathcal{L}(M))|=k$, so we can conclude that $t\leq \log_2(k)$. 
\end{enumerate}

In conclusion, $r-(k-1)k \le t \le \log_2(k)$, which implies $r\le (k-1)k+\log_2(k)$. Thus $v\leq r+(k-1)k\leq 2(k-1)k+\log_2(k)$. In other words we have shown that the number of vertices of any broken circuit complex with at most $k$ facets of a coloopless matroid is $2(k-1)k+\log_2(k)$, so the conclusion follows.
\end{proof}

\begin{proposition}\label{prop:lattice2}
Let $\Delta$ be a simplicial complex and $\mathcal{T}_\Delta$ the set of isomorphism classes of simple ordered matroids $(M,<)$ such that $BC_<(M)$ is isomorphic to $\Delta$. For any $\Delta$ with at least one vertex the set $\mathcal{T}_\Delta$ is finite.
\end{proposition}

\begin{proof}
We will bound the number of vertices $v$ of the independence complex of any such matroid. 
Let $M\in \mathcal{T}_\Delta$ and $C_1, C_2, \dots , C_s$ be the minimal nonfaces of $\Delta$, that is, the broken circuits of $M$. Assume that $C_i\cup x$ and $C_i\cup y$ are circuits of $M$. Pick an arbitrary $z\in C_i$. Note that by the circuit elimination axiom \cite[Lemma 1.1.3]{Oxley-book}, the set $(C_i\cup\{x,y\})\backslash\{z\}$ is a nonface. Since $M$ is simple, $x<y$ are not parallel. Thus there is a circuit of $M$ containing $\{x,y\}$. Such a circuit has to be equal to $C_j\cup\{x\}$ for some $j$ or $C_j\cup\{w\}$ for some $j$ and some other $w$ in the groundset of $M$. In either case $y\in C_j$, and hence a vertex of $\Delta$. Hence the number of vertices of $M$ not in $\Delta$ that extend each broken circuit $C_j$ is at most one, which leads to the inequality $f_0(\II(M)) \le f_0(\Delta) +s$, so the conclusion follows. 
\end{proof}

\section{Order complexes of geometric lattices.}

Recall from Section \ref{sec:geomlattices} that for a geometric lattice $L$ we have that $\OO(L)$ is shellable and that $|\tilde\chi(\OO(L))|=|\mu(\hat{0},\hat{1})|$, hence to classify geometric lattices by their homotopy type is to classify according to their rank and M\"obius number.\\

We begin with a simple argument to show the weaker, rank dependent, part of Theorem \ref{thm:flats}. Let $a(L)$ be the number of atoms of $L$.
\begin{theorem}\label{weakLattices}
The number of isomorphism classes of geometric lattices $L$ with rank $d$ and $|\mu(L)|=k$ is finite.
\end{theorem}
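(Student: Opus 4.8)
The plan is to reduce the statement to a bound on the number of atoms and then obtain that bound by an induction on $k$ built from the deletion--contraction recursion for the M\"obius function; this mirrors the independence-complex arguments, where the crucial step was bounding the number of vertices. First I would invoke the bijection between geometric lattices of rank $d$ and simple matroids of rank $d$: under it the atoms of $L=\LL(M)$ correspond exactly to the ground-set elements of $M$, so $a(L)$ equals the number of points of $M$. Since for each fixed $d$ and each fixed number of elements there are only finitely many simple matroids, and hence finitely many geometric lattices, of that rank and size, it suffices to prove that $a(L)$ is bounded by a function of $d$ and $k$ alone.

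The engine of the proof is the standard deletion--contraction recursion for the characteristic polynomial $\chi_M(t)$: if $e$ is neither a loop nor a coloop, then $\chi_M(t)=\chi_{M\setminus e}(t)-\chi_{M/e}(t)$. Evaluating at $t=0$, using $\chi_M(0)=\mu(\hat{0},\hat{1})$ and tracking ranks (those of $M$, $M\setminus e$, $M/e$ are $d$, $d$, $d-1$), this collapses to the clean additive identity
\[
|\mu(\LL(M))| = |\mu(\LL(M\setminus e))| + |\mu(\LL(M/e))|.
\]
Because $M$ is simple, every element is a non-loop, $M\setminus e$ is again simple of rank $d$, and $M/e$ is loopless of rank $d-1$; in particular both summands on the right are at least $1$. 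Two consequences drive the induction. First, $|\mu(L)|=1$ forces $M$ to have no non-coloop, i.e. $M=U_{d,d}$ is the free matroid with exactly $d$ atoms, since a non-coloop would make the right-hand side at least $2$. Second, for $k\ge 2$ any simple rank-$d$ matroid $M$ with $|\mu(L)|=k$ has a non-coloop $e$, and then $M\setminus e$ is a simple rank-$d$ matroid with $a(\LL(M\setminus e))=a(L)-1$ and $|\mu(\LL(M\setminus e))|=k-|\mu(\LL(M/e))|\le k-1$.

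Iterating the second consequence and terminating at the base case yields $a(L)\le d+(k-1)$: each deletion removes one atom while dropping $|\mu|$ by at least one, and the process ends at the free matroid on $d$ atoms. This bound is already tight for $d=2$, where it recovers $a(L)=k+1$ for the rank-two uniform matroid $U_{2,k+1}$. Finiteness of the geometric lattices of rank $d$ with $|\mu(L)|=k$ then follows at once from the finiteness of simple matroids of rank $d$ on at most $d+k-1$ elements. The main point requiring care is that contraction need not preserve simplicity, as it can create parallel elements, so the induction must be run along deletions, which do preserve both simplicity and rank, while the contraction term enters only through the inequality $|\mu(\LL(M/e))|\ge 1$; one must also guarantee that a non-coloop exists at each step, which is exactly what the characterization of the base case $k=1$ provides.
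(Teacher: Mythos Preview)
Your proof is correct and takes a genuinely different route from the paper's. The paper argues by induction on the rank $d$, using descending chains: either some rank-two flat covers at least $k+2$ atoms (giving $k+1$ descending chains directly), or every rank-two flat covers at most $k+1$ atoms, in which case the contraction $[x,\hat 1]$ at any atom $x$ has at least $a(L)/k$ atoms and induction applies. This yields the bound $a(L)<(k+1)k^{d-1}$, exponential in $d$.

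Your argument instead runs an induction on $k$ via deletion--contraction for the characteristic polynomial. The identity $|\mu(\LL(M))|=|\mu(\LL(M\setminus e))|+|\mu(\LL(M/e))|$ for a non-coloop $e$ of a simple matroid, together with $|\mu(\LL(M/e))|\ge 1$, lets you peel off atoms one at a time while decreasing $|\mu|$ by at least one, terminating at the free matroid. This gives the much sharper linear bound $a(L)\le d+k-1$, which is tight (e.g.\ for $U_{2,k+1}$ and for $U_{d,d}$), and matches the bound $h_1\le d-1+h_d$ obtained for independence complexes in Theorem~\ref{thm:hbound}. The price is that your argument is less self-contained at the lattice level: it goes through the characteristic polynomial of the underlying matroid rather than working directly with chains in $L$. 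But it is more elementary, avoids the combinatorics of descending chains entirely, and yields a quantitatively stronger conclusion.
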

\begin{proof}

We will show that if a rank-$d$ geometric lattice $L$ satisfies $a(L)\geq (k+1)k^{d-1}$, then $|\mu(L)|>k$.

We will proceed by induction on $d$. Let $L$ be a geometric lattice of rank $1$, then $|\mu(L)|=a(L)-1$, the number of atoms, and the base case follows.

Notice that in general if there exist $k+2$ atoms such that their join lies in rank two, then by labeling them with the largest $k+2$ numbers, we can guarantee at least $k+1$ descending chains. So let us assume that no $k+2$ atoms have a join in rank two, i.e., every element in rank two is the join of at most $k+1$ atoms. Fix an atom $x$ and consider the interval $L^x=[x,\hat{1}]$. This interval is a geometric lattice on its own (it corresponds to the matroid obtained by contracting the flat $x$). The atoms of $L^x$ are in bijection with elements of rank two in $L$ above $x$, and as such, they give a partition of the set of atoms of $L$ (other than $x$) by looking at the atoms each of them cover. This means that $k\cdot a(L^x)>a(L)$. Since the rank of $L^x$ is $d-1$, by induction on rank we know that if $a(L)\geq (k+1)k^{d-1}$, then in $a(L^x)\geq (k+1)k^{d-2}$ and therefore there are more than $k$ descending chains. By labeling $x$ with the largest number we can extend each of these chains to descending chains in $L$ to guarantee that $|\mu(L)|>k$. 

\end{proof}

\begin{remark} It should be noted that Swartz and Nyman~\cite{EdNym} proved that the order complex of any geometric lattice admits a convex ear decomposition. This is a decomposition pretty similar to a PS-ear one, except that one is allowed to start with other spheres, and attach other balls (all convex). They use the convex ear decomposition to study flag $h$-numbers, which we intend to do from various points of view in an upcoming project. In their theorem, the combinatorial types of spheres and balls are also prescribed, but form a different class of objects. It seems likely that a different proof of Theorem \ref{weakLattices} can be obtained using these results. 

\end{remark}

The above result looks like a natural extension of Theorem \ref{thm:main}, yet a careful look at Exercise 100(d) in Chapter 3 of \cite{EC1} gives a much stronger result. The level of the problem in the ranking [3-], but unlike most problems in the book, the solution is not written down. To the best of our knowledge, it is not anywhere in the literature, so we include it here for the sake of completeness. 

Recall that $B_d$ is the Boolean lattice of rank $d$, equivalently it is the lattice of flats of the unique matroid of rank $d$ on $d$ elements. 

\begin{theorem}\label{thm:latticeStrong}
Fix a natural number $k$. There exist finitely many geometric lattices $L_1,\cdots,L_m$ such that if $L$ is any finite geometric lattice satisfying $|\mu(L)|=k$ then $L\cong L_i\times B_d$ for some $i,d$.
\end{theorem}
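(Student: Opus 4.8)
\emph{The plan is} to peel off the Boolean factors first and then bound the rank of what remains. Adding a coloop to a matroid corresponds, on the level of lattices, to taking the product with $B_1$, and every geometric lattice factors uniquely as $L=L_0\times B_c$, where $L_0$ is \emph{reduced} (has no $B_1$ factor, equivalently its simple matroid is coloopless) and $c$ is the number of coloops. Since the M\"obius function is multiplicative over products and $\mu(B_c)=(-1)^c$, we have $|\mu(L)|=|\mu(L_0)|$. Thus it suffices to prove that there are finitely many reduced geometric lattices $L_0$ with $|\mu(L_0)|=k$: these are the lattices $L_1,\dots,L_m$, and the factorization then exhibits every $L$ with $|\mu(L)|=k$ as $L=L_i\times B_c$. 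Combined with Theorem~\ref{weakLattices}, which already gives finiteness once the rank is fixed, everything reduces to a rank bound.

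The core estimate I would prove is that \emph{a reduced geometric lattice of rank $d$ satisfies $|\mu(L)|\ge d$}. Granting this, $|\mu(L_0)|=k$ forces $\mathrm{rank}(L_0)\le k$, and Theorem~\ref{weakLattices} supplies finitely many geometric lattices of each rank $r\le k$ with $|\mu|=k$, so only finitely many reduced $L_0$ occur. To prove the estimate I would first reduce to connected matroids: if $M=M_1\oplus\cdots\oplus M_p$ is the decomposition into connected components then $|\mu(\LL(M))|=\prod_i|\mu(\LL(M_i))|$, and since a coloopless simple matroid has no rank-one component, every $d_i=\mathrm{rank}(M_i)$ satisfies $d_i\ge 2$. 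Because $\prod_i d_i\ge\sum_i d_i=d$ whenever all $d_i\ge 2$, it is enough to establish $|\mu|\ge d$ for a \emph{connected} simple matroid of rank $d\ge 2$.

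For the connected case I would induct on the rank using deletion--contraction. Writing $m(M)=|\mu(\LL(M))|$, which for loopless $M$ equals $|\chi_M(0)|$ (the characteristic polynomial at $0$), the identity $\chi_M=\chi_{M\setminus e}-\chi_{M/e}$ together with the sign rule $\mathrm{sign}\,\mu(\LL(M))=(-1)^{\mathrm{rank}\,M}$ gives, for any $e$ that is neither a loop nor a coloop of a \emph{simple} $M$ (so that $M/e$ is loopless), the clean additive relation $m(M)=m(M\setminus e)+m(M/e)$, where $\LL(M/e)=[a,\hat 1]$ for $a$ the atom of $e$. The base case $d=2$ is $U_{2,n}$ with $m=n-1\ge 2$. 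For the inductive step I would choose $e$ so that $M/e$ is again connected; then $\mathrm{si}(M/e)$ is connected, simple, of rank $d-1\ge 2$, so by induction $m(M/e)\ge d-1$, while $m(M\setminus e)\ge 1$ since $M\setminus e$ is loopless of full rank, yielding $m(M)\ge d$. The hard part is precisely this choice of $e$: I need the matroid-theoretic fact that a connected matroid of rank at least $2$ admits a single-element contraction that is again connected (equivalently, by duality, a connected matroid that is not a single cocircuit has a connected single-element deletion). This is what keeps the induction inside the connected — hence coloopless, and after simplification geometric — world, and it is the step I expect to require the most care; the rest is bookkeeping for the recursion.
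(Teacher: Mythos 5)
Your route is genuinely different from the paper's. Both proofs begin with the same reduction (strip off the Boolean factor, so that it suffices to handle coloopless simple matroids), but the paper never bounds the rank: it invokes Bj\"orner's Proposition~7.4.5 to identify $|\tilde\chi(\OO(L))|$ with the number of facets of a broken circuit complex $BC_<(M)$, notes that there are finitely many complexes with $k$ facets, and then shows by a circuit-elimination argument that each such complex arises as the broken circuit complex of only finitely many simple matroids. You instead prove the numerical estimate $|\mu(L_0)|\ge \mathrm{rank}(L_0)$ for reduced lattices, conclude $\mathrm{rank}(L_0)\le k$, and finish with Theorem~\ref{weakLattices}. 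Your reduction steps are all correct: multiplicativity of $\mu$ over products, the fact that every component of a coloopless simple matroid has rank at least $2$, the inequality $\prod d_i\ge\sum d_i$ when all $d_i\ge 2$, and the identity $m(M)=m(M\setminus e)+m(M/e)$ (valid because simplicity keeps $M/e$ loopless, and connectivity rules out loops and coloops).

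The gap is exactly the lemma you flag at the end, and it is worse than ``requiring care'': as you state it, it is false. The rank-$2$ matroid consisting of three parallel pairs (the cycle matroid of the doubled triangle, dual to $M(K_{2,3})$) is connected, but contracting any element turns its parallel partner into a loop, so \emph{every} single-element contraction is disconnected; dually, $M(K_{2,3})$ is connected and is not a circuit, yet every single-element deletion leaves a coloop, so the dual phrasing fails as well. What your induction actually needs is the statement for \emph{simple} $M$, and that is true, but it is a genuine theorem rather than a routine fact: if every $M/e$ were disconnected, then $M^*$ would be minimally connected, and by Murty's theorem every minimally connected matroid of rank at least two has a two-element cocircuit, i.e.\ $M$ would have a parallel pair, contradicting simplicity. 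So your argument can be completed, but only by importing this result (or an equivalent). Alternatively, you could bypass the contraction lemma entirely: $|\mu(\LL(M))|$ equals the number of nbc bases, i.e.\ $\sum_{i=0}^{d-1}h_i(\overline{BC}_<(M))$, and for connected $M$ the top entry equals Crapo's beta invariant $\beta(M)\ge 1$; then either the flawlessness of broken circuit $h$-vectors (Juhnke-Kubitzke and Van Dinh, already cited in the introduction) or Brylawski's positivity of the Tutte coefficients $t_{i,0}$, $1\le i\le d$, for connected matroids yields $h_i\ge 1$ for all $i$, hence $|\mu|\ge d$ directly. As written, however, the key estimate rests on an unproved lemma that is false in the generality in which you state it, so the proof is incomplete.
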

\begin{proof}

By \cite[Proposition 3.8.2]{EC1} we have $\mu(L_i\times B_d)=\mu(L_i)\mu(B_d)=\mu(L_i)(-1)^d$, so the absolute value doesn't change after taking strong product with a boolean lattice.

If $M$ is the matroid associated to $L$, then the matroid $M'$ associated to $L\times B_d$ is obtained by adding $d$ coloops to $M$. Thus it suffices to show that there are finitely many simple coloop free matroids $M$ whose lattice of flats has m\"obius function equal to $k$. 
%%%%%%%%
Assume that $M$ is such a matroid and $\mathcal{L}=\mathcal{L}(M)$ the lattice of flats. By \cite[Proposition 7.4.5]{bjorner} the $|\mu(\mathcal{L})|$ equals the number of facets of $BC_<(M)$ for any order $<$ on the ground set of $M$. Combining Proposition \ref{prop:latticeStrong} and Proposition \ref{prop:lattice2} the conclusion follows.
\end{proof}

\begin{remark} We notice that the proof of the previous theorem is far from sharp. In general, a matroid has many different broken circuit complexes that vary as the order changes. 
\end{remark}
As a corollary of Theorem \ref{thm:latticeStrong} we can drop the rank condition in the statement of Theorem \ref{thm:main} in the case of cosimple matroids.

\begin{corollary}\label{cor:cosimple} Let $k$ be a positive integer. There are finitely many isomorphism classes of cosimple loop free matroids $M$ whose independence complex satisfies $|\tilde{\chi}(\II(M))|=k$. 
\end{corollary}

\begin{proof}
Assume that $M$ satisfies the hypothesis. Then the dual matroid $M^*$ is a coloop free simple matroid. By \cite[Proposition 7.4.7]{bjorner} we have $k=|\tilde{\chi}(\II(M))|=|\mu(\LL(M^*))|$. Since $M^*$ has no coloops, the lattice $\LL(M^*)$ has no boolean factor and it is therefore in the finite list provided by Theorem \ref{thm:latticeStrong}. The result follows from the bijective correspondence between simple matroids and geometric lattices.
\end{proof}

%\section{Final remarks}
%The database of matroids in \cite{Database} list matroids according to rank and number of vertices. The classification allows the user access to lists of matroids with up to nine elements, and matroids with small ranks and a few more elements. The data base considers cases of simple and non simple matroids and has been quite useful in testing conjectures and to find examples of interesting matroids. 
%
%It would be interesting to create a similar database for matroids without coloops, were the parameters are the rank and the top $h$-number. The qualitative aspects of such matroids probably very different. Questions about the asymptotic values of representable matroids, paving matroids according to these parameters may be quite different. 
%
%The first step to a goal like this is perhaps finding various distinct alternatives to answer the following open ended questions. 
\section{Further Questions}
The matroids constructed in Lemma \ref{lem:many} are all non simple. The following question may inspire interesting constructions of matroids. 
\begin{question} Let $d,k$ be two positive integers. Is there a simple rank-$d$ matroid $M$ with $h_d(\II(M))=k$?
\end{question}

Of special interest is the case of $k=2$. We already saw that if $d=2$, then the answer is no. However, starting with $d=3$ such a matroid always exists.
\begin{theorem} If $d\ge 3$ there exists a simple rank $d$ matroid $M$ with $h_d(\II(M)) = 2$. 
\end{theorem}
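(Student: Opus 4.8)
The plan is to exhibit, for every $d\ge 3$, a single \emph{connected} graphic matroid that does the job, thereby sidestepping the failure of a direct-sum approach. Concretely, I would take the theta graph $\Theta_{1,2,d-1}$: two distinguished vertices $u,v$ joined by three internally disjoint paths of edge-lengths $1$, $2$, and $d-1$, and let $M=M(\Theta_{1,2,d-1})$ be its cycle matroid. Since at most one of the three paths has length $1$ (here $2,d-1\ge 2$), the graph has no multiple edges and no loops, so $M$ is a \emph{simple} matroid. The graph has $1+2+(d-1)-1=d+1$ vertices and is connected, hence $M$ has rank $d$. By the shellability discussion recalled earlier, $h_d(\II(M))=|\tilde\chi(\II(M))|$, so it suffices to show this reduced Euler characteristic has absolute value $2$.

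For the second step I would compute $\tilde\chi$ combinatorially. The faces of $\II(M)$ are exactly the forests of $\Theta_{1,2,d-1}$, i.e.\ the edge subsets containing no cycle. Every cycle of a theta graph is the union of exactly two of its three $u$--$v$ paths (all internal vertices have degree $2$), so an edge set $S$ is dependent precisely when it contains at least two of the three paths in full, and hence $S$ is a forest precisely when it contains \emph{at most one} full path. Writing $\tilde\chi(\II(M))=-\sum_{S\text{ forest}}(-1)^{|S|}$ and starting from $\sum_{S\subseteq E}(-1)^{|S|}=0$, I would isolate the dependent sets by inclusion--exclusion over the three events ``$S$ contains the $i$-th path in full''. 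For $T\subseteq\{1,2,3\}$ let $P_T$ be the union of the paths indexed by $T$; then $\sum_{S\supseteq P_T}(-1)^{|S|}=(-1)^{|P_T|}\,0^{\,|E\setminus P_T|}$ vanishes unless $P_T=E$, which happens only for $T=\{1,2,3\}$, where it equals $(-1)^{|E|}$. Möbius inversion over the Boolean lattice on $\{1,2,3\}$ then gives, for the signed count $b_T$ of subsets whose set of full paths is exactly $T$, the values $b_{\{1,2,3\}}=(-1)^{|E|}$ and $b_{\{i,j\}}=-(-1)^{|E|}$, so the signed number of sets containing at least two full paths is $(-1)^{|E|}+3(-(-1)^{|E|})=-2(-1)^{|E|}$. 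Hence $\sum_{S\text{ forest}}(-1)^{|S|}=2(-1)^{|E|}$ and $|\tilde\chi(\II(M))|=2$, independently of $d$.

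The computation is short, so the only real subtlety — and the reason a uniform construction is worthwhile — is the case $d=4$: one cannot obtain it as a direct sum $N\oplus N'$ of a rank-$3$ matroid with $h_3=2$ and a simple PS-sphere factor, since the remaining factor would have rank $1$, and no simple rank-$1$ matroid is a sphere (a single coloop has contractible independence complex, contributing $h_1=0$). The theta-graph family is connected for every $d$ and avoids this entirely. As sanity checks, $\Theta_{1,2,2}$ is $K_4$ minus an edge and yields the rank-$3$ matroid with $h$-vector $(1,2,3,2)$, while $\Theta_{2,2,2}=K_{2,3}$ reproduces the rank-$4$ matroid with $h$-vector $(1,2,3,4,2)$ already seen in Example~\ref{ex:ed}. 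I expect the inclusion--exclusion bookkeeping in the second step to be the one place requiring care, but no genuine obstacle arises.
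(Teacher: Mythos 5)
Your proof is correct, and in fact your example coincides with the paper's: the paper starts from the PS-sphere $\hat\Gamma_{d-1}\ast\hat\Gamma_1$ (the independence complex of $U_{d-1,d}\oplus U_{1,2}$) and attaches the ear $\hat\Gamma_{d-2}\ast\Gamma_1$, and the resulting complex has minimal non-faces of sizes $3$, $d$, and $d+1$ forming exactly the circuit structure of $M(\Theta_{1,2,d-1})$ — it is literally the independence complex of your theta-graph matroid. What differs is the verification route. The paper gets $h_d=2$ from its PS-ear machinery (starting from a PS-sphere and attaching one ear raises the top $h$-number by one, per Lemma~\ref{lem:hvectorear} and Remark~\ref{rem:ear}), and it simply asserts that the resulting complex is the independence complex of a simple matroid; that assertion is the one nontrivial point the paper leaves implicit. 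You instead start from the graphic matroid, so matroid-ness, simplicity (all three cycles of $\Theta_{1,2,d-1}$ have length $\ge 3$ when $d\ge 3$), and the rank count are immediate, and you establish $|\tilde\chi(\II(M))|=2$ by a direct inclusion--exclusion over which of the three $u$--$v$ paths are fully contained in an edge set; that computation is correct (the only surviving term of $\sum_{S\supseteq P_T}(-1)^{|S|}$ is $T=\{1,2,3\}$, giving signed dependent-set count $-2(-1)^{|E|}$), and it has the added virtue of working for any theta graph, which your $K_{2,3}$ sanity check exploits. So your argument buys self-containedness — it bypasses both the ear lemmas and the unproved claim that the ear-attached complex is again a matroid complex — at the cost of a slightly longer computation; the paper's version buys brevity and displays the example as an instance of its general ear-decomposition philosophy. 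Your side remark on why a direct-sum construction fails at $d=4$ is also accurate (no simple rank-$1$ or rank-$2$ matroid has top $h$-number $2$), though it is not needed for the proof itself.
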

 \begin{proof}Consider the PS-sphere $\Delta_0=\hat\Gamma_{d-1}*\hat\Gamma_1$. Attach the ear $\hat\Gamma_{d-2} *\Gamma_1$ identifying the vertices of $\hat\Gamma_{d-2}$ with any set of vertices of $\hat\Gamma_{d-1}$ to obtain a new complex $\Delta_1$. The complex $\Delta_1$ is the independence complex of a matroid $M$ since every induced subcomplex is pure. The graph of $\Delta_1$ is complete because the only missing edge of $\Delta_0$ is added when we the ear is attached. This implies that the matroid $M$ is simple since parallel elements form the missing edges of the independence complex. Finally, by Remark \ref{rem:ear} we have $h_d(\II(M))=2$.
\end{proof}

It is still not clear how many such matroids there are. It seems that $\hat\Psi_{d,1}$ can be embedded in $\hat\Psi_{d,2}$ by similar tricks, but we may note that the PS-ear decomposition is not necessarily unique and the results have to be dealt with carefully.

Pushing the question a bit further leads us to wonder about new techniques to construct matroids by keeping the dimension and and changing homology. The methods we have so far feel adhoc.  

\begin{problem} Given a rank $d$ matroid $M$ that is not a cone, construct a rank $d$ matroid $\hat M$ with $h_d(\hat M) = h_d(M) + 1$. A variant with $h_d(\hat M)= h_d(M)+c$ for a fixed constant $c$ that may depend on $d$ would also be of interest. 
\end{problem} 
\begin{question} Given a matroid $M$ is there a subset $U$ of the set of bases of $M$, that is the set of bases of a matroid $\overline M$ such that $h_d(\overline M) = h_d(M) - 1$? 
\end{question}
Attaching ears sometimes turns an independence complex into a non-independence complex. We provide a conjecture along the lines of these results. 

\begin{question} Assume that $\Delta$ is the independence complex of a matroid and let $\Delta'$ be a complex obtained from $\Delta$ by attaching a PS-ear that does not introduce a new vertex. Under which conditions is $\Delta'$ the independence complex of a matroid? 
\end{question}

Notice that if the PS-ball is of the form $\Sigma*\Gamma_i$ (with $i>1$), then all that is needed is that all the induced subcomplexes of vertex sets containing all the vertices of $\Gamma_i$ are pure. 

In contrast if an ear is attached and a new vertex is introduced, then the resulting complex can potentially be a matroid if and only if it is connected to all vertices not parallel to it. That seems to be a rare property: there has to be a parallel class whose complementary set of vertices induces a PS-sphere. 

The database of matroids in \cite{Database} list matroids according to rank and number of vertices. The classification allows the user access to lists of matroids with up to nine elements, and matroids with small ranks and a few more elements. The data base considers cases of simple and non simple matroids and has been quite useful in testing conjectures and finding examples of interesting matroids. 
\begin{question}
Is there an algorithm that generates all matroids of a given rank and topology efficiently for some (hopefully not very small) parameters?
\end{question}

A brute force approach can be worked from the already existing database of matroids. From the fact that $f_0(\II(M)) \le 2d + k -1$ we can extract all such matroids for some small values of $d$. In rank $3$ all the matroids with $h_d \le 5$ are contained in the database. For rank $4$ all simple matroids with $h_d \le 2$ are also in the database. This is, however, not interesting enough.

In the case of geometric lattices several invariants besides the $h$-vector of the order complex are of interest. For instance, it may be of interest to bound the Whitney numbers (of both kinds) and the flag $h$-vector in terms of the prescribed topology. We finalize by posing two questions. 

\begin{question} Given $k>0$, what is the largest rank of a geometric lattice $\LL$ that does not contain a factor of $B_n$ for any $n$ and such that $|\mu(\LL)|=k$? 
\end{question}

\begin{question}
Does Corollary \ref{cor:cosimple} extend to the class of Broken Circuit complexes, i.e is it true that the rank conditions in Theorem \ref{thm:nbc} can be dropped for cosimple matroids?
\end{question}
We remark that for connected matroids $M$ we have that $h_{d-1}(\overline{BC_<(M)})$ equals the \emph{beta invariant} of the matroid $\beta(M)$. In \cite{oncrapo} J.Oxley classified all matroids whose beta invariant is between $1$ and $4$ in terms of excluded minors. It would be interesting to compare Theorem \ref{thm:nbc} to that classification. In particular it may be possible that the ideas of the classification can be used to produce a database of matroids stratified by rank and beta invariant. \\

\noindent{\bf Acknowledgements:} We would like to thank Richard Stanley for interesting conversations and for pointing out the reference in his book to Theorem~\ref{thm:latticeStrong}. Thanks to Ed Swartz for reminding us of Example~\ref{ex:ed}. An anonymous referee pointed out the connections between geometric lattices and cosimple matroids that inspired Corollary \ref{cor:cosimple}. We are specially indebted to Isabella Novik for various interesting conversations and helpful suggestions on preliminary versions. We are grateful to the University of Washington and University of Kansas where parts of this project were carried out. The second named author also thanks the University of Miami where he was employed when most of the project was carried out. This project was completed while both authors were members of the Max-Planck Institute for Mathematics in the Sciences.  

\bibliographystyle{alpha}
\bibliography{BIBLIO}
\end{document}